\numberwithin{equation}{section}
\theoremstyle{theorem}
\newtheorem{theorem}{Theorem}[section]
\newtheorem{lemma}[theorem]{Lemma}
\newtheorem{proposition}[theorem]{Proposition}
\newtheorem{corollary}[theorem]{Corollary}
\theoremstyle{definition}
\newtheorem{definition}[theorem]{Definition}
\newtheorem{remark}[theorem]{Remark}
\newtheorem{observations}[theorem]{Observations}
\renewcommand{\a}{\alpha}
\renewcommand{\d}{\delta}
\newcommand{\x}{\xi}
\newcommand{\sub}{\subset}
\newcommand{\R}{\mathbb{R}}
\newcommand{\N}{\mathbb{N}}
\newcommand{\Z}{\mathbb{Z}}
\newcommand{\MM}{\mathcal{M}}
\newcommand{\NN}{\mathcal{N}}
\newcommand{\FF}{\mathcal{F}}
\newcommand{\PP}{\mathcal{P}}
\renewcommand{\SS}{\mathcal{S}}
\newcommand{\BB}{\mathcal{B}}
\newcommand{\GG}{\mathcal{G}}
\newcommand{\we}{\wedge}
\newcommand{\DD}{\mathcal{D}}
\newcommand{\hel} {
\hskip2.5pt{\vrule height7pt width.5pt depth0pt}
\hskip-.2pt\vbox{\hrule height.5pt width7pt depth0pt}
\, }
\newcommand{\restr}{\hel}
\renewcommand{\j}{\jmath}
\renewcommand{\i}{\imath}
\newcommand{\mc}{\mathcal}
\newcommand{\sm}{\backslash}
\newcommand{\pt}{\partial}
\newcommand{\eps}{\varepsilon}
\newcommand{\oo}{\infty}
\newcommand{\nb}{\nabla}
\newcommand{\ov}{\overline}
\newcommand{\wt}{\widetilde}
\newcommand{\h}{\mathcal{H}}
\newcommand{\dw}{\downarrow}
\newcommand{\up}{\uparrow}
\newcommand{\cd}{\cdot}
\newcommand{\longto}{\longrightarrow}
\renewcommand{\t}{\times}
\newcommand\un{\bs{1}}
\newcommand{\Om}{\Omega}
\newcommand{\om}{\omega}
\newcommand{\lb}{\llbracket}
\newcommand{\rb}{\rrbracket}
\newcommand{\be}{\begin{equation}}
\newcommand{\ee}{\end{equation}}
\newcommand{\sslo}{\underset{\text{\scriptsize lex.}}<}
\newcommand{\XXint}[3]{{\setbox0=\hbox{$#1{#2#3}{\int}$}
      \vcenter{\hbox{$#2#3$}}\kern-.5\we0}}
\newcommand{\void}{\varnothing}
\newcommand{\st}{\stackrel}
\newcommand{\lt}{\left}
\newcommand{\rt}{\right}
\newcommand{\bs}{\boldsymbol}
\newcommand{\M}{\mathbb{M}}
\newcommand{\F}{\mathbb{F}}
\DeclareMathOperator{\supp}{supp}
\DeclareMathOperator{\bndry}{fr}
\DeclareMathOperator{\Gr}{Gr}
\newcommand{\blue}{\color{blue}}
\newcommand{\comfig}[1]{}
\title{ Set-decomposition of normal rectifiable $G$-chains \emph{via} an abstract decomposition principle}
\author{M. Goldman\footnote{ CMAP, CNRS, \'Ecole polytechnique, Institut Polytechnique de Paris, 91120 Palaiseau,
France, email: michael.goldman@cnrs.fr} \and B. Merlet\footnote{Univ. Lille, CNRS, UMR 8524, Inria - Laboratoire Paul Painlev\'e, F-59000 Lille, email: benoit.merlet@univ-lille.fr}}
\begin{document}
\maketitle

\begin{abstract}
We introduce the notion of \emph{set-decomposition} of a normal $G$-flat chain $A$ in $\R^n$ as a sequence $A_j=A\restr S_j$ associated to a Borel partition $S_j$ of $\R^n$ such that $\N(A)=\sum \N(A_j)$.
We show  that any \emph{normal rectifiable} $G$-flat chain admits a decomposition in set-indecomposable sub-chains. This generalizes the decomposition of sets of finite perimeter in their ``measure theoretic" connected components due to Ambrosio, Caselles, Masnou and Morel. It can also be seen as a variant of the decomposition of integral currents in indecomposable components by Federer.\\
As opposed to previous results, we do not assume  that $G$ is boundedly compact. Therefore we cannot rely on the  compactness of sequences of chains with uniformly bounded $\N$-norms. We deduce instead  the result from a new abstract decomposition principle. 

As in earlier proofs a central ingredient is  the validity of an isoperimetric inequality. We obtain it here using the finiteness of some $h-$mass to replace integrality. 
\end{abstract}


\section{Introduction}
The aim of this note is to extend the notion of decomposition of normal currents from the integral setting~\cite{Federer, ACMM, BPR2020,BdNP2022} to the general setting of normal rectifiable $G-$flat chains. This work is motivated by~\cite{GM_tfc} where we use  the decomposition result to study the rectifiability properties of tensor flat chains.\\

In order to state our main result, let us start with some notation and definitions. Let $G$ be a complete Abelian normed group and let $0\le k\le n$. We denote by $\FF_k^G(\R^n)$ the group of $k$-chains in $\R^n$ with coefficients in $G$ as introduced by Fleming in~\cite{Fleming66}. However, as in~\cite{White1999-1,White1999-2} we do not assume that chains are compactly supported. The mass of a chain $A$ is denoted $\M(A)$ and $\MM_k^G(\R^n)$ is the subgroup of finite mass $k$-chains. The restriction of $A\in\MM_k^G(\R^n)$ to a Borel set $S\sub\R^n$ is denoted $A\restr S$. By definition, $A\in\MM_k^G(\R^n)$ is rectifiable if $A=A\restr \Sigma$ for some countably $k$-rectifiable set $\Sigma\sub\R^n$. By~\cite[Section 6]{White1999-1}, we can identify every rectifiable $k-$chain with a measure $w \xi \h^k\restr\Sigma$
where $w: \R^n\to G$ is  Borel measurable and $\xi$ is a Borel measurable field of unit simple $k$-vectors orienting $\Sigma$. Eventually, we set $\N(A):=\M(A)+\M(\pt A)$ and denote $\NN_k^G(\R^n)=\{A\in \FF_k^G(\R^n):\N(A)<\oo\}$  the subgroup of normal $k$-chains.

Flat chains with real or integer coefficients were introduced as a particular class of currents in~\cite{FedFlem60}. Later, in~\cite{Fleming66} Fleming proposed a theory for flat chains with coefficients in a (commutative normed) group, still in an Euclidean ambient space, which have been further developed by White in~\cite{White1999-1,White1999-2}. Shortly after,~\cite{AK2000} introduced a theory of currents in an ambient metric space. Both this theory and the theory of Fleming and White have been generalized to form a theory of $G$-flat chains in metric spaces in~\cite{HdP2012}.\\
Even in the case of Euclidean ambient spaces, the topological and geometrical structure of rectifiable and even finite mass chains is still under investigation (see \textit{e.g.}~\cite{AlMa2017,Young2018,flat}). In this note we introduce the notion of set-decomposition of normal flat chains and prove that every normal and  rectifiable chain can be decomposed in indecomposable components.

\begin{definition}
Let $A\in\NN_k^G(\R^n)$.\medskip

\noindent 
(1) A \emph{set-decomposition} of $A$ is a sequence (finite or countable) of normal chains $A_j$ such that there exists a  Borel partition $S_j$ of $\R^n$ with  $A_j=A\restr S_j$ for every $j$ and $\N(A)=\sum \N(A_j)$.\\ 
We say that each $A_j$ is a \emph{set-subchain} of $A$.\medskip

\noindent
(2) We say that $A$ is \emph{set-indecomposable} if the only set-decompositions of $A$ are trivial, that is, for any set-decomposition $A_j$ of $A$, there holds $A_j=A$ for some index $j$ and $A_j=0$ for the others.
\end{definition}
\begin{remark}\label{rem:rectif}
 Notice that by definition if $A$ is rectifiable then for every Borel set $S$, $A\restr S$ is also rectifiable. In particular every set-decomposition of a rectifiable chain is made of rectifiable subchains.
\end{remark}

\begin{theorem}\label{thm_decomp}
Let $A\in\NN_k^G(\R^n)$, if $A$ is rectifiable then it admits a set-decomposition in set-indecomposable subchains. 
\end{theorem}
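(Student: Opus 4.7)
The plan is to translate the existence of a set-decomposition into the pure atomicity of a canonical finite measure on a $\sigma$-algebra of ``cuts'', and then to rule out an atomless part by means of an isoperimetric inequality for rectifiable normal $G$-chains obtained as set-subchains of $A$.

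First I would introduce
\[
\DD := \bigl\{ S \subset \R^n \text{ Borel} : \N(A) = \N(A\restr S) + \N(A\restr S^c) \bigr\}, \qquad \nu(S) := \N(A\restr S),
\]
and show that $\DD$ is a $\sigma$-algebra and $\nu$ a finite measure on it. Since $\M$ is always additive on disjoint Borel pieces, the only identity to propagate is the ``no boundary cancellation'' condition $\M(\pt(A\restr S)) + \M(\pt(A\restr S^c)) = \M(\pt A)$. Stability of $\DD$ under complementation is trivial; stability under finite intersection proceeds by a four-box argument: for $S,T \in \DD$, successive splittings along $S$ and along $T$ together with subadditivity of $\M\circ\pt$ force equality throughout the chain
\[
\M(\pt A) = \M(\pt(A\restr T)) + \M(\pt(A\restr T^c)) \leq \sum_{\eps,\eta \in \{0,1\}} \M\bigl(\pt(A\restr(S^\eps\cap T^\eta))\bigr) \leq \M(\pt A),
\]
placing the four boxes in $\DD$ and yielding finite additivity of $\nu$. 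Stability under increasing unions $S_n \uparrow S$ relies on dominated convergence for $\M(A\restr S_n) \to \M(A\restr S)$, lower semi-continuity of mass along the flat-convergent sequence of boundaries, and the fact that $\M(\pt(A\restr S_n)) + \M(\pt(A\restr S_n^c)) = \M(\pt A)$ is constant; countable additivity of $\nu$ follows by the same limiting argument.

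The set-decomposition problem then translates as follows: Borel partitions $\{S_j\}$ of $\R^n$ for which $\{A\restr S_j\}$ is a set-decomposition of $A$ are exactly the countable $\DD$-partitions, and $A\restr S$ is set-indecomposable iff $S$ is a $\nu$-atom of $\DD$, that is $\nu(S)>0$ and every $T\in\DD$ with $T\subset S$ satisfies $\nu(T)\in\{0,\nu(S)\}$. Theorem~\ref{thm_decomp} is therefore equivalent to the pure atomicity of $\nu$ on $\DD$. To rule out an atomless portion on some $S_0\in\DD$ with $\nu(S_0)>0$, Sierpinski's continuous-range theorem yields, for any $\eps>0$, a finite $\DD$-partition $S_0 = \bigsqcup_{i=1}^{N_\eps} T_i^\eps$ with $\max_i \nu(T_i^\eps)\le\eps$. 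The decisive analytic input is the isoperimetric inequality announced in the abstract, established separately via the finiteness of some $h$-mass in place of integrality; applied to set-subchains of $A$ it delivers a bound of the form $\M(A\restr T) \leq f\bigl(\M(\pt(A\restr T))\bigr)$ with $f$ non-decreasing and $f(t)/t \to 0$ as $t\to 0^+$. Using $\M(\pt(A\restr T_i^\eps)) \leq \nu(T_i^\eps) \leq \eps$ and summing,
\[
\M(A\restr S_0) = \sum_i \M(A\restr T_i^\eps) \leq \sum_i f\bigl(\nu(T_i^\eps)\bigr) \leq \Bigl(\sup_{0<t\leq \eps}\tfrac{f(t)}{t}\Bigr)\,\nu(S_0) \xrightarrow[\eps\to 0^+]{} 0,
\]
which forces $A\restr S_0=0$ and therefore $\nu(S_0)=0$, a contradiction. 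Hence $\nu$ is purely atomic on $\DD$, and its atoms $\{S_j\}$ (up to a $\nu$-null complement) give the desired set-decomposition $A_j := A\restr S_j$ of $A$ into set-indecomposable subchains.

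The main obstacle is precisely the exclusion of the atomless part: with $G$ not boundedly compact, no limit chain can be extracted out of the refining sequence $\{T_i^\eps\}$, and integrality no longer provides a uniform lower bound on the mass of any non-trivial indecomposable component. The measure-theoretic atomization above plays the role of the abstract principle that bypasses compactness, but it only becomes effective once the isoperimetric inequality is in hand; producing that inequality in the general $G$-chain setting through finiteness of an $h$-mass is where the substantive new work is done.
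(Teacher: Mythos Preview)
The central claim that $\DD$ is a $\sigma$-algebra fails. Closure under complements and under monotone limits does hold (and the latter is precisely what the paper exploits, via hypothesis~(H1) of Lemma~\ref{lem_decomp}), but closure under finite intersection does not: in your displayed chain the inequality $\sum_{\eps,\eta}\M\bigl(\pt(A\restr(S^\eps\cap T^\eta))\bigr)\le\M(\pt A)$ is the \emph{reverse} of subadditivity and need not hold. The paper's own example (two crossing unit segments $A=A^h+A^v$, point~(c) of the introduction) already gives a counterexample: take $S$ Borel with $A\restr S=A^h$ and $T=\{x_2>x_1\}$. Both lie in $\DD$---they yield the two distinct maximal set-decompositions $(A^h,A^v)$ and $(A^+,A^-)$---yet $A\restr(S\cap T)$ is a single half-segment and one checks $\M\bigl(\pt(A\restr(S\cap T))\bigr)+\M\bigl(\pt(A\restr(S\cap T)^c)\bigr)=6>4=\M(\pt A)$. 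This non-closure is exactly the mechanism behind the non-uniqueness of maximal set-decompositions for $1\le k\le n-1$: distinct ``atoms'' of $\nu$ can overlap on sets where $A$ has positive mass, so the atomic/atomless dichotomy and Sierpi\'nski's range theorem have no standing on $\DD$.

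A secondary, more reparable issue: the bound $\M(A\restr T)\le f\bigl(\M(\pt(A\restr T))\bigr)$ with $f(t)/t\to0$ is false for any nonzero closed set-subchain (take $A\restr T$ a cycle of positive mass). What Lemma~\ref{lem_isopineq} actually provides is a bound on the \emph{flat norm}, $\F(A')\le\eta(\M(A'))\,(\M_h(A')+\N(A'))$. Summing this over the pieces would give $\F(A\restr S_0)\le\sum_i\F(A\restr T_i^\eps)\le\eta(\eps)\,(\M_h(A)+\N(A))\to0$, hence $A\restr S_0=0$; so this step can be fixed---but only once the partitioning into arbitrarily small $\DD$-pieces is justified, which returns you to the first gap. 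The paper sidesteps the $\sigma$-algebra obstruction entirely: it works with the partial order $\preceq$ on set-subchains, uses only the monotone-limit closure (your one correct structural observation) together with the flat-norm isoperimetric bound, and extracts atoms one at a time through the greedy procedure of Lemma~\ref{lem_decomp}.
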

Such decomposition is also called a maximal set-decomposition of $A$. 

We obtain Theorem~\ref{thm_decomp} as a corollary of the abstract decomposition Lemma~\ref{lem_decomp}, stated and established in Section~\ref{S_decomp}. Let us give some comments. \medskip

\noindent
(a) When $A$ is an integral current, a set-decomposition is in general coarser than a decomposition into indecomposable integral currents introduced by Federer~\cite[4.2.25]{Federer}.  For instance, if $A'$ is the integral 1-current with multiplicity 1 associated with a smooth oriented Jordan curve, then $A:=2A'$ is set-indecomposable in $\NN^\Z_1(\R^n)$ but admits the decomposition $(A',A',0,\dots)$ in the sense of Federer. \medskip

\noindent
(b) In the case  $k=n$ and $G=\Z$, if $A=\lb E\rb$, where $E$ is a set of finite perimeter, our definition corresponds to the decomposition of $E$ into its measure theoretic connected components introduced in~\cite{ACMM} and Theorem~\ref{thm_decomp} generalizes~\cite[Theorem~1]{ACMM}. \medskip

\noindent
(c)  For $k=0$, the set-decomposition in set-indecomposable subchains of a normal rectifiable $0$-chain is essentially unique.  Indeed, any normal rectifiable $0$-chain is of the form $A=\sum g_j\lb x^j\rb$ where $g_j\in G$ is such that $\sum |g_j|_G<\oo$ and $x^j\in\R^n$ is a sequence of \emph{pairwise distinct} points. The set-indecomposable $0$-chains are the chains $g\lb x\rb$ for $g\in G$, $x\in\R^n$. It follows that the sequence $(g_1\lb x^1\rb,g_2\lb x^2\rb,\dots)$ is a set-decomposition in set-indecomposable subchains of the above $0$-chain $A$. Moreover, all the maximal set-decompositions are obtained by rearranging this latter and possibly inserting and removing zeros.\medskip

At the other end, any $n$-chain $A$ is rectifiable and the group of normal rectifiable $n$-chains is the group of normal $n$-chains. We believe that the set-decomposition of normal $n$-chains in set-indecomposable subchains is also essentially unique. We establish this fact in the particular case $G=\R$ (and thus also $G=\Z$), see the statement of Proposition~\ref{prop_uniq_n}.
\medskip

On the contrary, for $1\le k\le n-1$, the decomposition in set-indecomposable subchains is in general not unique even up to rearrangements. For instance, set $n=2$ and $G=\Z$ and consider the polyhedral $1$-chains with multiplicity 1, $A^h$ and $A^v$, where
\begin{enumerate}[$*$]
\item $A^h$ is supported by the horizontal segment $[-1,1]\t\{0\}$ and is oriented by $e_1$, 
\item $A^v$ is supported by the vertical segment $\{0\}\t[-1,1]$ and is oriented by $e_2$.
\end{enumerate}
Setting, 
\[A:=A^h+A^v,\qquad\qquad  A^+:=A\restr\{x_2>x_1\},\qquad\qquad A^-:=A\restr\{x_2<x_1\},\] 
we see that $(A^h,A^v,0,\dots)$ and $(A^+,A^-,0,\dots)$ are  two distinct set-decompositions of $A$ in set-indecomposable subchains.\medskip

\noindent
(d)  For a decomposition process to result in an \emph{at most countable number} of indecomposable parts, we need some principle which prevents big pieces  from crumbling into dust. In the abstract decomposition Lemma~\ref{lem_decomp}, this principle is provided by assumption (H2) which is a \emph{superlinear} estimate of a ``weak norm'' by a ``strong norm''.  In~\cite{Federer,ACMM} this role is played by some isoperimetric inequalities which give a \emph{superlinear} estimate of the mass of an object by the mass  of its boundary. Here we use Lemma~\ref{lem_isopineq} which is of the same nature. Indeed, with Step~1 of the proof of Theorem~\ref{thm_decomp}, we have that if $A$ is normal and rectifiable there exists an increasing and strictly subadditive cost function $h\in C(\R_+,\R_+)$ with $h'(0^+)=\oo$ such that $\M_h(A)<\oo$ (see Definition~\ref{def_hmass} for the definition of $\M_h$). The isoperimetric inequality of Lemma~\ref{lem_isopineq} (which extends Almgren's isoperimetric inequality~\cite{Almgren}, see Remark~\ref{rem_isoper}) then provides a nondecreasing function $\eta:\R_+\to\R_+$ such that $\eta(m)\to0$ as $m\dw0$ and for every $k$-chain $A'$
\be\label{eta_intro}
\F(A')\le \eta(\M(A'))\,(\M_h(A')+\N(A')).\smallskip
\ee

\noindent
(e) Unlike the references mentioned above, our proof of Theorem~\ref{thm_decomp} does not use any compactness theorem of the form:
\be\label{notassumed}
\text{``for $\Lambda\ge0$, the set $\{A\in \NN_k^G(\R^n): \supp A\sub \ov B_\Lambda,\ \N(A)\le \Lambda\}$ is compact in $\F$-norm.''}
\ee
This statement is true if and only if $G$ is boundedly compact in which case it is a classical consequence of the deformation theorem. We use here instead the convergence in strong norm of monotone sequences. More precisely we use the following simple fact: if $A$ has finite mass (resp. finite $h$-mass) and $A_j=A\restr S_j$ with $S_j$ a nonincreasing sequence of Borel subsets of $\R^n$ we have by the monotone convergence theorem, $A_j\to A\restr \cap S_j$ in mass (resp. in $h$-mass).\medskip


\noindent
(f) Some generalizations of the results of~\cite{Federer,ACMM} exist in the context of real currents in metric spaces of~\cite{AK2000}. Namely,~\cite[Theorem~2.14]{BPR2020} generalizes the decomposition of sets with finite perimeter in a (doubling) metric measure space and~\cite[Theorem~3.2]{BdNP2022} generalizes the decomposition of integral currents in metric spaces. This suggests that a version of Theorem~\ref{thm_decomp} should hold true within the theory of $G$-currents in metric spaces developed in~\cite{HdP2012}.\medskip

\noindent 
(g) In connection with (d), let us point out that if we fix a cost function $h$ as above (satisfying in particular $h'(0^+)=\infty$) we can define the $h-$mass of any flat chain as the lower-semicontinuous envelope  of $\M_h$ restricted to polyhedral chains. By~\cite[Theorem 8.1]{White1999-2} every normal chain with finite $h-$mass is rectifiable (with $h-$mass coinciding with $\M_h$ by~\cite{White1999-1,CdRMS2017}). Therefore Theorem~\ref{thm_decomp} provides a decomposition in indecomposable components for normal chains of finite $h-$mass. Partly due to their connection with branched transport models, this type of functionals has received a lot of attention in the past few years, see \textit{e.g.}~\cite{BraWir,CFM2019a,colombo2021well}. It is however worth noticing that our notion of set-decomposition (and indecomposability) is independent of the choice of $h$.\medskip

\noindent
(h) Let us mention similar decomposition results for rectifiable $m$-varifolds in $U\subset\R^d$ whose first variation is a measure. First in~\cite[\S 6.12]{Menne2016} the existence of a set-decomposition in set-indecomposable components is established. Second, in~\cite{Chou22} the decomposition of an integral varifold into countably many indecomposable integral varifolds is proved.\\ 
In the first case a varifold $V$ is said to be set-decomposable if there exists a Borel subset $B\sub U$ such that $W:=V\restr B\t[\Gr(m,\R^n)]$ satisfies $W\not\in\{0,V\}$  and $\d W=(\d V)\restr B$. This is stronger (and in general strictly stronger) than the condition $\|\d V\|=\|\d W\|+\|\d(V-W)\|$. In the second case $V$ is decomposable if there is exists an integral varifold $W\le V$ such that $W\not \in\{0,V\}$ and  $\|\d V\|=\|\d W\|+\|\d(V-W)\|$. \\
We believe that these results (including the variant with a weaker notion of set-decomposability) could be obtained as applications of Lemma~\ref{lem_decomp}. In this setting the superlinear estimate akin to the isoperimetric inequality should stem from the monotonicity identity of~\cite[\S 4.5\,\&\,4.6]{Menne2016}. However, using Lemma~\ref{lem_decomp} is not likely to improve the results of~\cite{Menne2016,Chou22} or even simplify their proofs. Consequently, we opt not to investigate further these issues here.\bigskip

The main contribution of this note is the fact that we obtain the decomposition result Theorem~\ref{thm_decomp} without the closure/compactness property~\eqref{notassumed}, that is: without assuming that $G$ is boundedly compact.\\
To highlight the interest of our method and how it differs from previous approaches, we give in Appendix~\ref{App2} an alternative proof of the theorem under the additional assumption that (1.2) holds true. This alternative proof is very close in spirit to the one of~\cite[Theorem~1]{ACMM}.\bigskip

In the next section we establish Lemma~\ref{lem_decomp}. It provides an abstract decomposition principle in Abelian normed groups, assuming a general version of~\eqref{eta_intro} and a closure property for nonincreasing sequences in the subset chosen for the decompositions. In Section~\ref{S_proof_td}, we prove the isoperimetric inequality for normal rectifiable chains (Lemma~\ref{lem_isopineq}) and then Theorem~\ref{thm_decomp}. 
In Section~\ref{S_uniq_n} we state and prove Proposition~\ref{prop_uniq_n} about the uniqueness of the maximal set-decomposition of a normal $n$-chain when $G$ is a subgroup of $(\R,+)$. \\
In Appendix~\ref{App1}, we establish a simple ``higher integrability" lemma used in the proof of Theorem~\ref{thm_decomp}. Eventually, in Appendix~\ref{App2}, we give a more classical proof of Theorem~\ref{thm_decomp} valid when $G$ is boundedly compact. 

\section{An abstract decomposition Lemma}
\label{S_decomp}

\begin{lemma}\label{lem_decomp}
Let $(\GG,+,\nu)$ be a complete Abelian normed group and let $\SS\sub\GG$ such that $0\in\SS$.
\begin{enumerate}[(i)]
\item  A sequence $a_j\in\SS$ is a decomposition of $b\in\GG$ if $b=\sum a_j$ and $\nu(b)=\sum\nu(a_j)$. In such case we write $a_j\preceq b$, for every $j\ge1$.
\item $b$ is an atom if $b\in\SS$ and any decomposition of $b$ is trivial, that is $a\preceq b$ implies $a=0$ or $a=b$.
\end{enumerate}
We make the following assumptions.
\begin{enumerate}
\item[(H1)] The limit of nonincreasing sequences $b_j\in\GG$ (that is $b_1\succeq b_2\succeq\dots$) belong to $\SS$.
\item[(H2)] There exists another norm $\phi$ on $\GG$ and a nondecreasing function $\eta:\R_+\to\R_+$ with $\lim\limits_{s\to0}\eta(s)=0$ such that $\phi(a)\le \eta(\nu(a))\,\nu(a)$ for every $a\in\GG$. 
\end{enumerate}
Then, if $b\in\GG$ admits at least one decomposition, it admits a decomposition in atoms.
\end{lemma}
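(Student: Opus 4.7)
The plan is to follow the overall strategy of the alternative proof---minimizing a lex-like quantity and identifying the minimizer with an atomic decomposition---but to replace the $\F$-compactness used there by the interplay of (H1) and (H2). The argument breaks into two pillars: (i) every nonzero $b\in\SS$ admitting a decomposition contains a nonzero atom $c\preceq b$; (ii) these atoms can be assembled into a full decomposition of $b$.

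Before attacking (i), I would establish the following strengthening of (H1): for any $\preceq$-nonincreasing sequence $(b_j)\subset\GG$, the $\nu$-limit $b_\infty$ (which exists by the completeness of $\nu$ and lies in $\SS$ by (H1)) actually satisfies $b_\infty\preceq b_j$ for every $j$. To see this, pick for each $j$ a decomposition $b_j = b_{j+1} + \sum_k a_{j,k}$ (with $a_{j,k}\in\SS$) realizing $b_{j+1}\preceq b_j$, then concatenate them into a single countable decomposition $b_1 = b_\infty + \sum_{j,k} a_{j,k}$, whose $\nu$-additivity follows by passing the telescoping identity $\sum_{l\le j,k}\nu(a_{l,k}) = \nu(b_1) - \nu(b_{j+1})$ to the limit.

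For (i), the key observation is that for nonzero $c\in\SS$ and any nontrivial split $c = c'+c''$ the ratio $\rho(c):=\phi(c)/\nu(c)$ satisfies $\rho(c)\le\max(\rho(c'),\rho(c''))$. I would then apply Zorn's lemma to
\[
P_b^+ := \{c\in\SS\setminus\{0\} : c\preceq b,\ \rho(c)\ge\rho(b)\}
\]
ordered by $\preceq$. For a $\preceq$-descending chain $(c_n)\subset P_b^+$, (H2) gives $\nu(c_n)\ge\eta^{-1}(\rho(b))>0$ via $\rho(c_n)\le\eta(\nu(c_n))$, so the limit $c_\infty$ is nonzero, lies in $\SS$ by (H1), and is $\preceq$-below each $c_n$ by the preliminary step. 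Writing the gap decomposition $c_n - c_\infty = \sum_k a_{n,k}$ and invoking (H2) again, one gets $\phi(c_n-c_\infty)\le\eta(\nu(c_n)-\nu(c_\infty))(\nu(c_n)-\nu(c_\infty))\to 0$, hence $\rho(c_n)\to\rho(c_\infty)\ge\rho(b)$ and $c_\infty\in P_b^+$. A Zorn-minimal element $c^*$ of $P_b^+$ must be an atom, for any nontrivial split of $c^*$ would produce, via the ratio observation, a strictly $\preceq$-smaller element of $P_b^+$.

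For (ii) I would apply a second Zorn argument on the poset of ``partial atomic decompositions''---pairs $(F,D)$ where $D$ is a decomposition of $b$ and $F$ is the subset of its pieces already identified as atoms---ordered by refinement of $D$ (with atoms preserved) combined with inclusion of $F$. Chains admit common refinements obtained by taking the union of the atomic parts together with, for each $\preceq$-descending branch of non-atomic pieces through the refinement tree, the $\preceq$-limit produced by the preliminary step. A Zorn-maximal element $(F^*,D^*)$ must have all pieces of $D^*$ atomic: otherwise (i) applied to any non-atomic piece yields a strict refinement, contradicting maximality. The main obstacle of the whole argument is clearly (i); preserving the inequality $\rho\ge\rho(b)$ along descending chains in the limit is precisely where the superlinear isoperimetric estimate (H2) becomes indispensable.
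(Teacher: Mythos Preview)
Your preliminary step and part~(i) are correct and give an elegant variant of the paper's atom extraction. In the paper, the atom is found by driving down the ``granularity'' $q(b):=\inf\{\sup_j\nu(b_j)\}$ along a carefully built $\preceq$-descending sequence; your Zorn argument on $P_b^+$, using the ratio $\rho=\phi/\nu$ together with~(H2) to keep $\nu$ bounded away from zero along chains, is a clean alternative. (Two small points to make explicit: chains in $(P_b^+,\preceq)$ have countable cofinality because $\nu$ is strictly $\preceq$-monotone, so sequential limits suffice; and when arguing that a minimal $c^*$ is an atom, the maximum of $\rho(a_j)$ over a decomposition of $c^*$ is attained since $\rho(a_j)\le\eta(\nu(a_j))\to0$.)

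Part~(ii), however, has a genuine gap: the poset of partial atomic decompositions does \emph{not} satisfy the chain condition needed for Zorn. Take a chain in which, at each stage, one fixed non-atomic piece $r$ is refined into two non-atomic sub-pieces of equal $\nu$-value, with $F=\emptyset$ throughout. After $\omega$ steps every branch through $r$ has $\nu$-limit $0$, so your proposed upper bound would be a decomposition of $b$ in which all pieces refining $r$ vanish---forcing $r=0$, a contradiction. Nothing in~(H1) or~(H2) rules out such chains in your poset; part~(i) only guarantees that each nonzero piece \emph{contains} an atom, not that every refinement step in a chain extracts one. Hence Zorn is inapplicable as stated.

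This is exactly the difficulty the paper's Steps~2--3 address. The atom produced there is not arbitrary but satisfies the quantitative bound $q(b-a)\le\nu(a)$; iterating gives remainders $b_j$ with $q(b_j)\le\nu(a_j)\to0$, and then~(H2) forces $\phi(b_j)\to0$, hence $b_\infty=0$. Your ratio condition $\rho(c^*)\ge\rho(b)$ gives no comparable control on the remainder $b-c^*$ (whose $\rho$ may drop), so~(ii) cannot be repaired by straightforward iteration of~(i) either---a further idea, playing the role of the paper's $q(\cdot)$, is needed to guarantee exhaustion.
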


\begin{remark}\label{rem_lem_decomp}~\medskip

\noindent (1) Since $0\in\SS$ any element $a\in\SS$ admits the trivial decomposition $(a,0,0,\dots)$ and the lemma implies that under Assumptions (H1)\&(H2) any element of $\SS$ admits a decomposition in atoms.\footnote{Conversely if any element of $\SS$ admits a decomposition in atoms, then if $b\in\GG$ admits a decomposition $(b_1,b_2,\dots)$ we obtain a decomposition of $b$ by collecting the decompositions in atoms of the $b_j$'s.}\\
Also notice that $0$ is always an atom.\medskip

\noindent (2) Let us stress again that~(H1) is the only closure property that we consider in the lemma, and that it only concerns monotone sequences. In the proof of Theorem~\ref{thm_decomp}, (H1) follows from the monotone convergence theorem of measure theory.\medskip

\noindent (3) As already mentioned, the lemma provides an alternative proof to the existence of decompositions in indecomposable components of a normal integral current supported in some compact $K$ (\cite[4.2.25]{Federer},~\cite[Theorem~3.2]{BdNP2022}).  For this, we take  $\SS=\GG$ as the group of normal integral currents supported in $K$, $\nu=\N$ and $\phi=\F$. In this setting,~(H1) follows from the completeness of $(\GG,\nu)$  and~(H2) from the isoperimetric inequality.
\end{remark}

Before proving the lemma, let us discuss some consequences of the assumptions. It is convenient to consider a broader notion of decomposition. For this, we complete the definitions (i)(ii) in the lemma by:
\begin{enumerate}[($\a$)]
\item[(iii)] We say that a sequence $a_j\in\GG$ is a pseudo-decomposition of $b\in\GG$ if $b=\sum a_j$ and $\nu(b)=\sum\nu(a_j)$. In such case we write for every $j\ge1$, $a_j\preceq_\psi b$.\medskip
\end{enumerate}
A decomposition is then a pseudo-decomposition whose components lie in $\SS$.
\begin{observations}\label{obs_}~\medskip

\noindent (1) Let $a,b\in\GG$ with $a\preceq b$ (or $a\preceq_\psi b$). By definition, there exists a (pseudo-)decomposition $a_j$ of $b$ with $a=a_{j_0}$ for some $j_0\ge 1$. Rearranging the first $j_0$ terms, we may always assume $a_1=a$.\medskip

\noindent (2)  The relation $\preceq_\psi$ defines a partial order on $\GG$. More precisely, for $a,b,c\in\GG$, there hold, 
\be\label{poset}
a\preceq_\psi b \text{ and } b\preceq_\psi a\ \iff\ a=b, \qquad\qquad\qquad a\preceq_\psi b \text{ and } b\preceq_\psi c\ \implies\ a\preceq_\psi c.
\ee
As a consequence, $(\GG,\preceq_\psi)$ is a partially ordered set. Similarly $(\SS,\preceq)$ is a partially ordered set.\smallskip

Let us prove~\eqref{poset}. First any $a\in\GG$ admits the pseudo-decomposition $(a,0,0,\dots)$ so $a\preceq_\psi a$. Conversely, let $a,b\in\GG$ such that $a\preceq_\psi b$ and $b\preceq_\psi a$. Let $a_j$ be a pseudo-decomposition of $b$ such that $a_1=a$ and let $b_i$ be a pseudo-decomposition of $a$ such that $b_1=b$.  We compute,
\[
\nu(b)=\nu(a)+\sum_{j\ge 2} \nu(a_j)=\nu(b)+\sum_{i\ge 2} \nu(b_i)+\sum_{j\ge 2} \nu(a_j).
\]
Hence $b_i=a_j=0$ for, $i,j\ge 2$ and $a=b$. This proves the equivalence in the left of~\eqref{poset}.\\
Let us now establish the implication in the right. Let $a,b,c\in\GG$ and assume that $a\preceq_\psi b\preceq_\psi c$. Denoting $a_i$ and $b_j$ some pseudo-decompositions of $b$ and $c$ with $a_1=a$ and $b_1=b$, we have 
\[
c=b+\sum_{j\ge 2}b_j=a+\sum_{i\ge 2}a_i+\sum_{j\ge 2}b_j,
\]
with
\[
\nu(c)=\nu(b)+\sum_{j\ge 2}\nu(b_j)=\nu(a)+\sum_{i\ge 2}\nu(a_i)+\sum_{j\ge 2}\nu(b_j).
\]
Setting $d_1=a$ and then $d_{2j}=a_{j+1}$, $d_{2j+1}=b_{j+1}$ for $j\ge1$, we get that $d_j$ is a pseudo-decomposition of $c$ with $d_1=a$, hence $a\preceq_\psi c$ as claimed.\medskip

\noindent (3.a) If $a\preceq_\psi b$, then $(a,b-a,0,0,\dots)$ is a pseudo-decomposition of $b$. Indeed if $a_j$ is a pseudo-decomposition of $b$ with $a_1=a$, then $b-a=\sum_{j\ge 2}a_j$ and by the triangle inequality,
\[
\nu(a)+\nu(b-a)\ge\nu(b)=\sum\nu(a_j)=\nu(a)+\sum_{j\ge2}\nu(a_j)\ge\nu(a)+\nu(b-a).
\]
Hence $\nu(b)=\nu(a)+\nu(b-a)$ and $(a,b-a,0,0,\dots)$ is a pseudo-decomposition of $b$.\\
If moreover $a$ admits a pseudo-decomposition $c_i$, then the sequence $(b-a,c_1,c_2,\dots)$ is a pseudo-decomposition of $b$. Indeed, we have similarly $b=(b-a)+\sum c_i$ and
\[
\nu(b-a)+\sum\nu(c_i)\ge \nu(b)=\nu(b-a)+\nu(a)=\nu(b-a)+\sum\nu(c_i),
\]
and $\nu(b)=\nu(b-a)+\sum\nu(c_i)$.\medskip

\noindent (3.b) Applying this principle recursively, if  $b_1\succeq_\psi b_2\succeq_\psi\dots$ then for $j\ge 2$, $(b_j,(b_{j-1}-b_j),(b_{j-1}-b_{j-2}),\dots,(b_1-b_2))$ is a pseudo-decomposition of $b_1$, in particular,
\begin{align}
\label{obs_1}
b_1&=b_j+\sum_{1\le i<j}(b_{i+1}-b_i),\\
\label{obs_2}
\nu(b_1)&=\nu(b_j)+\sum_{1\le i<j}\nu(b_{i+1}-b_i).\medskip
\end{align}

\noindent (3.c) By~\eqref{obs_2} we see that the series $\sum(b_j-b_{j+1})$ is absolutely converging and since $\GG$ is complete the sum admits a limit $c_\oo$. In light of~\eqref{obs_1} we see that the sequence $b_j$ also converges and its limit is $b_\oo:=b_1-c_\oo$. Notice that this justifies the existence of this limit which is implicitly assumed in hypothesis (H1). \medskip

\noindent (3.d) Passing to the limit in~\eqref{obs_1}\&\eqref{obs_2} we obtain that
\[
(b_\oo,b_1-b_2,\,b_2-b_3,\,\dots)
\] 
is a pseudo-decomposition of $b_1$.\\
Also remark that  we can start the nonincreasing sequence from any $j>1$ rather than from $1$. We deduce that for $j\ge1$,
\[
(b_\oo,b_j-b_{j+1},\,b_{j+1}-b_{j+2},\,\dots)
\] 
is a pseudo-decomposition of $b_j$.\medskip

\noindent (4) With the same triangle inequality based arguments as above, if $b_j$ is a pseudo-decomposition of $b$ and for each $j$, $(b_{j,i})_i$ is a pseudo-decomposition of $b$ then for any bijection:  
\[
r\in\{1,2,3,\dots\}\mapsto (\ov j(r),\ov i(r))\in\{1,2,3,\dots\}\times\{1,2,3,\dots\},
\] 
the sequence defined by
\[
a_r:=b_{\ov j(r),\ov i(r)}\qquad\text{for }r\ge1,
\]
is a pseudo-decomposition of $b_j$. Besides, if for every $j$, $(b_{j,i})_i$ is a decomposition of $b_j$ (that is $b_{j,i}\in\SS$ for every $i,j$) then $a_r$ is a decomposition of $b$.
\end{observations}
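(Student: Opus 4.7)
The whole block of observations follows from two organizing principles: (a) the norm identity $\nu(b)=\sum\nu(a_j)$ in the definition of a pseudo-decomposition, combined with the triangle inequality, forces the triangle inequality to be an equality on every partial sum of $\sum a_j$; and (b) both $\sum a_j$ and $\sum\nu(a_j)$ are unconditionally convergent, so indices may be freely permuted and a doubly indexed family reindexed through any bijection. Completeness of $(\GG,\nu)$ will intervene in only one place, namely (3.c), where it produces the limit $b_\oo$.

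For (1), I would simply swap the indices $1$ and $j_0$ in a witnessing pseudo-decomposition: both defining identities are invariant under permutation by principle (b). For the partial order in (2), reflexivity follows from the trivial sequence $(a,0,0,\dots)$. Antisymmetry: use (1) to arrange $a_1=a$ and $b_1=b$ in the two given pseudo-decompositions, substitute the expression for $\nu(a)$ into the one for $\nu(b)$ to obtain $\nu(b)=\nu(b)+\sum_{i\ge 2}\nu(b_i)+\sum_{j\ge 2}\nu(a_j)$, which forces all tail norms and hence all tail terms to vanish, so $a=b$. Transitivity: given $a\preceq_\psi b\preceq_\psi c$, again apply (1) to arrange $a_1=a$ and $b_1=b$; the merged sequence $(a,a_2,a_3,\dots,b_2,b_3,\dots)$ enumerated through any bijection of the joint index set sums to $c$ in $\GG$ with norm sum $\nu(a)+(\nu(b)-\nu(a))+(\nu(c)-\nu(b))=\nu(c)$, hence is a pseudo-decomposition of $c$ with $a$ as its first term.

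For (3.a), start from any pseudo-decomposition of $b$ with $a_1=a$; the tail sum equals $b-a$, and the triangle inequality gives $\nu(b-a)\le\sum_{j\ge 2}\nu(a_j)=\nu(b)-\nu(a)$, while $\nu(b)-\nu(a)\le\nu(b-a)$ is also a triangle inequality. Equality therefore holds and $(a,b-a,0,\dots)$ is a pseudo-decomposition of $b$. Inserting a pseudo-decomposition of $a$ in place of its first term yields the concatenation statement by the same tight-triangle argument. Part (3.b) is (3.a) applied telescopically to $b_i\succeq_\psi b_{i+1}$; the resulting finite identity $\nu(b_1)=\nu(b_j)+\sum_{i<j}\nu(b_{i+1}-b_i)$ gives the uniform bound $\sum_i\nu(b_{i+1}-b_i)\le\nu(b_1)$, so the series $\sum(b_i-b_{i+1})$ is absolutely convergent; by completeness of $(\GG,\nu)$ it has a limit $c_\oo$, whence $b_j=b_1-\sum_{i<j}(b_i-b_{i+1})\to b_1-c_\oo=:b_\oo$, proving (3.c). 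Part (3.d) comes from passing to the limit $j\up\oo$ in (3.b) using continuity of addition and of $\nu$ under $\nu$-convergence; the same statement starting at any $j_0\ge 1$ follows identically. Finally for (4), the double series $\sum_{j,i}\nu(b_{j,i})=\sum_j\nu(b_j)=\nu(b)<\oo$ is absolutely convergent, so any bijective reindexing preserves both the sum in $\GG$ and the sum of norms; if in addition each $b_{j,i}\in\SS$, the rearranged sequence is a decomposition.

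The main obstacle I expect lies in the merging step underlying the transitivity part of (2) and the reindexing in (4): one has to verify that a merged or rearranged sequence is still a pseudo-decomposition, which requires combining the tight triangle inequality (principle (a)) with unconditional convergence (principle (b)) in a consistent way. Once this combinatorial template is established, the remaining items are routine telescoping and a single invocation of completeness of $(\GG,\nu)$ in (3.c).
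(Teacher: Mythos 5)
Your proposal is correct and follows essentially the same route as the paper: every item is reduced to the saturation of the triangle inequality forced by $\nu(b)=\sum\nu(a_j)$, together with unconditional rearrangement/merging of absolutely convergent series, with completeness of $(\GG,\nu)$ invoked exactly once, in (3.c). The individual computations (the antisymmetry identity, the interleaved sequence for transitivity, the two-sided triangle inequality in (3.a), the telescoping in (3.b), and the bijective reindexing in (4)) match the paper's argument step for step.
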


\begin{proof}[Proof of Lemma~\ref{lem_decomp}] In the proof, we use the preceding observations without explicit mention.\medskip
 
\noindent
\textit{Step 1 (Definition and properties of $q(\cd)$).} We define for $b\in\GG$, the quantity,
\[
q(b):=\inf\lt\{\sup \nu(b_j):b_j\text{ decomposition of }b\rt\},
\]
with the convention $q(b)=\oo$ if $b$ does not admit a decomposition. In the other cases, when $b_j$ is a decomposition of $b$, since $\sum\nu(b_j)=\nu(b)<\oo$, the supremum $\sup \nu(b_j)$ is a maximum. \medskip

Let us establish some properties of $q$. First any $b\in\SS$ admits the decomposition $(b,0,0,\dots)$ and we deduce,
\be\label{prf_lem_dec_1}
q(b)\le\nu(b)\qquad\text{for every }b\in\SS.\smallskip
\ee

\noindent First, we claim that if $b_j$ is a pseudo-decomposition of $b$ then,
\be\label{prf_lem_dec_2}
q(b)\le \sup q(b_j).
\ee
To establish~\eqref{prf_lem_dec_2} we assume without loss of generality that $\sup q(b_j)$ is finite. Let $\eps>0$ and for $j\ge 1$, let $(b_{j,1},b_{j,2},\dots)$ be a decomposition of $b_j$ such that $\max_i\nu(b_{j,i})\le q(b_j)+\eps$. Rearranging the countable family $b_{j,i}$, $i,j\ge1$ to form a sequence, we obtain a decomposition $a_r$ of $b$ with $\sup_r\nu(a_r)\le\max_jq(b_j)+\eps$. This yields $q(b)\le\sup_jq(b_j)+\eps$ and then~$q(b)\le \sup q(b_j)$ since $\eps>0$ is arbitrary.\\
Observe that  if $b_j$ is decomposition of $b$, we have $q(b_j)\le \nu(b_j)$ by~\eqref{prf_lem_dec_1} and since $\nu(b_j)\to0$, the supremum in~\eqref{prf_lem_dec_2} is a maximum.\medskip

%

\noindent In the rest of the proof we assume that $b\in\GG$ admits at least one decomposition. Equivalently,
\[q(b)<\oo.\]
We now establish the following.
\be\label{prf_lem_dec_3}
\forall\,\eps>0\ \exists\, b_1\preceq b\text{ such that}
\lt\{\begin{array}{l}
\phantom{aaa_1}q(b)\le q(b_1)\le\nu(b_1)\le q(b)+\eps,\\
q(b-b_1)\le q(b_1). 
\end{array}
\rt.
\ee
Let $b_j$ be a decomposition of $b$ with $\max\nu(b_j)\le q(b)+\eps$. Since $q(b_j)\le\nu(b_j)$ and $\sum \nu(b_j)<\oo$, the sequence $q(b_j)$ reaches its maximum. Reordering if necessary, we assume $q(b_1)=\max q(b_j)$. There holds, 
\[
q(b)\st{\eqref{prf_lem_dec_2}}\le \max q(b_j) =q(b_1)\st{\eqref{prf_lem_dec_1}}\le\nu(b_1)\le q(b)+\eps.
\]
 Moreover, by~\eqref{prf_lem_dec_2} again applied to the decomposition $(b_j)_{j\ge 2}$ of $b-b_1$, we have $q(b-b_1)\le\max_{j\ge 2}q(b_j) \le q(b_1)$. This proves~\eqref{prf_lem_dec_3}.\medskip

\noindent
\textit{Step 2 (Extraction of a ``big atom'').}  Let $b\in\GG$ with $q(b)<\oo$. Let us establish that:
\be\label{prf_lem_dec_4}
\text{there exists an atom }a\preceq b\text{ such that }q(b-a)\le\nu(a).
\ee
Let $\eps_j>0$ with $\eps_j\to0$. We build recursively $b_0\succeq b_1\succeq b_2\succeq\dots$ by setting $b_0=b$ and then by applying~\eqref{prf_lem_dec_3} to $b_j$ with $\eps=\eps_j$. We have for $j\ge 0$,
\begin{align}
\label{prf_lem_dec_5}
q(b_j)&\le q(b_{j+1}) \le \nu(b_{j+1} ) \le q(b_j) +\eps_j,\\ 
\label{prf_lem_dec_6}
q(b_j-b_{j+1})&\le q(b_{j+1}).
\end{align}

Applying~(H1) to the nonincreasing sequence $b_j$, there exists $a\in\SS$ such that $b_j\to a$ and by Observations~\ref{obs_}(3.d)\&(4) we have $a\preceq b_j$ for every $j\ge0$ and in particular $a\preceq b=b_0$. Moreover, from~\eqref{prf_lem_dec_5}, $q(b_j)$ is nondecreasing and
\be
\label{prf_lem_dec_7}
\sup_{j\ge0}q(b_j)=\lim_{j\up\oo}q(b_j)= 
\lim_{j\up\oo}\nu(b_j)=\inf_{j\ge0}\nu(b_j)=\nu(a).
\ee
Next, for $j\ge1$, the sequence $(b_i-b_{i+1})_{i\ge j}$ is a pseudo-decomposition of $b_j-a$ and we deduce from~\eqref{prf_lem_dec_2} that
\be
\label{prf_lem_dec_8}
q(b_j-a)\le\max_{i\ge j} q(b_i-b_{i+1}).
\ee
Applying this inequality with $j=0$, we get (recall $b_0=b$),
\[
q(b-a)\le\max_{i\ge 0}q(b_i-b_{i+1})\st{\eqref{prf_lem_dec_6}}\le\sup_{i\ge 0}q(b_{i+1})\st{\eqref{prf_lem_dec_7}}= \nu(a).
\] 
This proves the inequality in~\eqref{prf_lem_dec_4}.

We still have to check that $a$ is an atom. For this, we first prove:
  \be\label{prf_lem_dec_9}
 \lim_{j\up\oo}q(b_j-a)=0.
 \ee
 Combining~\eqref{prf_lem_dec_8} and ~\eqref{prf_lem_dec_2} we have, $q(b_j-a)\le \max_{i\ge j} \nu(b_i-b_{i+1})$ and since  $\sum_{i\ge 0}  \nu(b_i-b_{i+1})<\oo$, the right-hand side goes to $0$ as $j\up\oo$. This proves~\eqref{prf_lem_dec_9}.

Let now  $(c_i)$ be a decomposition of $a$ and assume without loss of generality that $\max\nu(c_i)=\nu(c_1)$. For $j\ge 0$, the sequence $(b_j-a,c_1,c_2,\dots)$ is a pseudo-decomposition of $b_j$. We compute,
\[
q(b_j)
\st{\eqref{prf_lem_dec_2}\eqref{prf_lem_dec_1}}
\le\max\Big(q(b_j-a),\,\max_i\nu(c_i)\Big)
=\max\lt(q(b_j-a),\,\nu(c_1)\rt).
\]
Sending $j$ to $\oo$, by~\eqref{prf_lem_dec_7}, the left-hand side converges towards $\nu(a)=\sum\nu(c_i)$ and, by~\eqref{prf_lem_dec_9}, the right-hand side towards $\nu(c_1)$. We obtain,
\[
\sum \nu(c_i)\le\nu(c_1),
\]
hence $c_i=0$ for $i\ge 2$ and $a$ is an atom. The claim~\eqref{prf_lem_dec_4} is established.
\medskip

\noindent
\textit{Step 3 (Conclusion).} Let $b\in\GG$ such that $q(b)<\oo$. We build recursively two sequences $a_j\preceq b$, $b_j\preceq_\psi b$ indexed by $j\ge0$ such that the $a_j$'s are atoms and for $j\ge 1$, $(a_1,a_2,\dots,a_j,b_j,0,\dots)$ is a pseudo-decomposition of $b$.  For this, we start with  $a_0:=0$, $b_0:=b$ and then for $j\ge 0$ we apply~\eqref{prf_lem_dec_4} to $b_j$ to get an atom $a_{j+1}\preceq b_j$. We then set $b_{j+1}:=b_j-a_{j+1}$ so that $(a_{j+1},b_{j+1})$ is a pseudo-decomposition of $b_j$ and proceed to the next step.  By construction, $(a_1,a_2,\dots,a_{j+1},b_{j+1},0,\dots)$ is a pseudo-decomposition of $b$. Moreover, by~\eqref{prf_lem_dec_4}, for $j\ge0$
\be\label{prf_lem_dec_10}
q(b_{j+1})\le \nu(a_{j+1}).
\ee
In particular $q(b_{j+1})<\oo$ and we can apply~\eqref{prf_lem_dec_4} to $b_{j+1}$ and continue the construction.\\
The pseudo-nonincreasing sequence $b_j$ converges to some $b_\oo\in\GG$  (because $\GG$ is complete) 
 and $(a_1,a_2,\dots)$  is a decomposition of $b-b_\oo=b_0-b_\oo$ in atoms.\smallskip

To conclude, we establish that $b_\oo=0$. Let us fix $j\ge 1$ and let $(b_{j,1},b_{j,2},\dots)$ be a decomposition of $b_j$ such that $\nu(b_{j,i})\le2q(b_j)$ for $i\ge1$. 
Using the triangle inequality and (H2) we have for $k\ge 1$, 
\be\label{interm}
\phi(b_j)\le\sum_{i=1}^k \phi(b_{j,i}) +\phi\lt(\sum_{i>k} b_{j,i}\rt) \le\sum_{i\ge1}\phi(b_{j,i}) +\phi\lt(\sum_{i>k} b_{j,i}\rt).
\ee
Now, by (H2), for any sequence $c_j\in \GG$, there holds, 
\be\label{phi<nu}
\nu(c_j)\to0\ \implies\ \phi(c_j)\to0.
\ee
Since $\sum b_{j,i}$ converges in $\nu$-norm, we deduce that the last term of~\eqref{interm} goes to 0 as $k\to+\oo$ and we  get,
\[
\phi(b_j)\le\sum_{i\ge1} \phi(b_{j,i}).
\] 
Then, applying~(H2) to the $b_{j,i}$'s, we compute
\[
\phi(b_j)\le\sum_{i\ge1}\eta(\nu(b_{j,i}))\,\nu(b_{j,i}) \le \eta(2q(b_j))\sum_{i\ge1}\nu(b_j^i) = 
\eta(2q(b_j))\,\nu(b_j)\le
\eta(2q(b_j))\,\nu(b).
\]
Using~\eqref{prf_lem_dec_10}, we obtain,
\[
\phi(b_j)\le\eta(2\nu(a_j))\,\nu(b).
\]
Since $\sum\nu(a_j)\le\nu(b)<\oo$ we have $\nu(a_j)\to0$, hence $\eta(2\nu(a_j))\to0$ and we obtain $\phi(b_j)\to0$. Eventually, by triangle inequality, we have
\[
\phi(b_\oo)\le\phi(b_j)+\phi(b_j-b_\oo)\ \st{j\up\oo}\longto\ 0,
\]
where we apply~\eqref{phi<nu} with $c_j=b_j-b_\oo$ to see that the last term goes to 0. 
Since $\phi$ is a norm we conclude that $b_\oo=0$ which proves the lemma.
\end{proof}
\begin{remark}
It transpires from the proof that we do not need the whole ``normed group'' structure of $\GG$. Indeed, the inequality $\nu(c-a)\le\nu(c)+\nu(a)$ is never used. By inspecting the demonstration, we see that the lemma is still correct if $(\GG,+,\nu)$ is a complete normed commutative \emph{monoid}.  In this setting, the identities of the form $b=c-a$ in the proof should be read as $a+b=c$. For instance, the term $b-a$ of Observations~\ref{obs_}~(3.a) should be defined as $\sum_{j\ge2} a_j$ where $(a,a_2,a_3,\dots)$ is a pseudo-decomposition of $b$.
\end{remark}

\section{Proof of Theorem~\ref{thm_decomp}}
\label{S_proof_td}
As in the introduction, $(G,+,|\cdot|_G)$ is a complete Abelian normed group and $0\le k\le n$. Let us first establish an isoperimetric inequality for normal rectifiable $G$-flat chains. 
In light of Almgren's isoperimetric inequality~\cite{Almgren,White1999-1}, the result is not that surprising and might not be new. Its proof is based on the deformation theorem of White~\cite{White1999-1} 
and follows the steps of the proof of Federer's isoperimetric inequality for integral currents.\\
Let us recall the definition of the $h$-mass of a rectifiable chain, see~\cite[Section~6]{White1999-1}.
\begin{definition}~\label{def_hmass}

Let $h:\R_+\to\R_+$ be a lower semicontinuous and subadditive function satisfying $h(0)=0$. 
The $h$-mass of a rectifiable $k$-chain $A= w \xi \h^k\restr \Sigma$ is defined as
\[
\M_h(A):=\int_\Sigma h(\rho)\, d\h^k, 
\]
where $\rho:=|w|_G$.
\end{definition}
The condition $h(0)=0$ ensures that the definition does not depend on the choice of $(\Sigma,w)$. The  lower-semicontinuity and subadditivity properties are necessary to get good properties of $\M_h$ with respect to  convergence and projections/deformations. Let us recall in particular that under these assumptions $\M_h$ is countably subadditive (see~\cite[Section~6]{White1999-1}), that is,
\be\label{count_sub}
\M_h\lt(\sum B_j\rt)\le\sum\M_h(B_j)\qquad\text{for }B_j\in\MM^G_k(\R^n)\text{, rectifiable.}
\ee
Of course, the case $h(s)=s$ corresponds to the usual mass.

\begin{lemma}[Isoperimetric inequality for normal rectifiable chains]~\\
\label{lem_isopineq}
Let $h:\R_+\to\R_+$ be lower semicontinous, subadditive and such that $h(0)=0$, $h>0$ on $(0,+\oo)$ and  $h'(0^+):=\lim_{s\dw0}h(s)/s=\oo$.
There exists a nondecreasing function $\eta:\R_+\to\R_+$ only depending on $n$ and $h$ such that $\lim_{m\dw0} \eta(m)=0$ and:
\[
\F(A)\le\eta(\M(A))\,(\M_h(A)+\N(A))
\]
for every normal rectifiable chain $A\in\NN_k^G(\R^n)$.
\end{lemma}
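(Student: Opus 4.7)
The plan is to follow Federer's classical argument for the isoperimetric inequality of integral currents (\cite[Section~4.2.9]{Federer}), in which integrality of coefficients is used to force a suitable polyhedral approximation to vanish. Here we substitute integrality by the superlinearity $h'(0^+)=\infty$, which will make the ``small-coefficient'' part of the polyhedral approximation negligible in mass, and hence in flat norm.

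First I would apply White's deformation theorem \cite{White1999-1} to $A$ at a scale $\varepsilon>0$ (to be optimised below) to write
\[
A = P + \partial Q + R,
\]
where $P=\sum_\sigma g_\sigma\sigma$ is polyhedral on the $k$-skeleton of an $\varepsilon$-grid of $\R^n$ and
\[
\M(Q)\le C\varepsilon\M(A),\quad \M(R)\le C\varepsilon\M(\partial A),\quad \M(P)\le C\bigl(\M(A)+\varepsilon\M(\partial A)\bigr),\quad \M(\partial P)\le C\M(\partial A).
\]
Since the projection in the deformation is Lipschitz and $h$ is subadditive, lower semicontinuous and satisfies $h(0)=0$, the rectifiable chain $P$ also satisfies an $h$-mass estimate of the type $\M_h(P)\le C\M_h(A)$ (up to a lower-order boundary contribution). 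Consequently $\F(A-P)\le C\varepsilon\N(A)$, and the problem reduces to bounding $\F(P)$.

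Next I would fix a threshold $\tau>0$ and observe that, since $|g_\sigma|_G\le\M(P)/\varepsilon^k$ on every cell, a suitable choice $\varepsilon^k\gtrsim\M(A)/\tau$ (together with a preliminary split into the regimes $\varepsilon\M(\partial A)\lesssim\M(A)$ and its complement, in order to absorb the boundary term) guarantees $|g_\sigma|_G\le\tau$ for every cell. Introducing
\[
\psi(\tau):=\inf_{0<s\le\tau}\frac{h(s)}{s},
\]
the hypothesis $h'(0^+)=\infty$ gives $\psi(\tau)\uparrow\infty$ as $\tau\downarrow 0$, while on each cell $|g_\sigma|_G\le h(|g_\sigma|_G)/\psi(\tau)$. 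Summing over cells yields
\[
\M(P)=\sum_\sigma|g_\sigma|_G\,\varepsilon^k\le\frac{1}{\psi(\tau)}\sum_\sigma h(|g_\sigma|_G)\,\varepsilon^k=\frac{\M_h(P)}{\psi(\tau)}\le\frac{C\M_h(A)}{\psi(\tau)},
\]
so that $\F(P)\le\M(P)\le C\M_h(A)/\psi(\tau)$. Combining the two estimates and choosing $\tau=\tau(\M(A))\downarrow 0$ and $\varepsilon=\varepsilon(\M(A))=(C\M(A)/\tau)^{1/k}\downarrow 0$ as $\M(A)\downarrow 0$ (for instance $\tau(m)=\sqrt m$), I would set $\eta(m):=\max\bigl(C\varepsilon(m),\,C/\psi(\tau(m))\bigr)$ and obtain
\[
\F(A)\le\eta(\M(A))\bigl(\M_h(A)+\N(A)\bigr)\qquad \text{with } \eta(m)\to 0 \text{ as }m\downarrow 0.
\]

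The main obstacle is the handling of the boundary contribution $\varepsilon\M(\partial A)$ in the coefficient bound $|g_\sigma|_G\le\M(P)/\varepsilon^k$: forcing \emph{every} cell to satisfy $|g_\sigma|_G\le\tau$ requires both $\varepsilon^k\gtrsim\M(A)/\tau$ and $\varepsilon^{k-1}\gtrsim\M(\partial A)/\tau$, which may be incompatible with the requirement that $\varepsilon(m)$ and $\tau(m)$ depend only on $m=\M(A)$. A careful case analysis is needed, treating the regime $\varepsilon\M(\partial A)\lesssim\M(A)$ via the argument above and, in the complementary regime, handling the ``large-coefficient'' cells of $P$ separately by a coherent cone-filling whose internal residuals cancel on shared $(k-1)$-faces, so that only the overall boundary $\partial P$ (of mass $\le C\M(\partial A)$) contributes. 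A second technical point is to derive the $h$-mass bound $\M_h(P)\le C\M_h(A)$ from the deformation theorem, which relies on the Lipschitz character of the projection together with the subadditivity and lower semicontinuity of $h$.
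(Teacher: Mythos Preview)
Your approach is essentially the same as the paper's: apply the deformation theorem, use $h'(0^+)=\infty$ to convert the mass of the polyhedral piece into its $h$-mass via the cell-wise inequality $|g_\sigma|_G\le h(|g_\sigma|_G)/\psi(\tau)$, and then optimise in $\varepsilon$. Your function $\psi(\tau)$ is exactly the reciprocal of the paper's $\wt\eta$.

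The difference---and the reason you run into your ``main obstacle''---is that you quote the deformation theorem in Federer's form, with $\M(P)\le C(\M(A)+\varepsilon\M(\partial A))$. White's version \cite[Theorem~1.1]{White1999-1}, which is the one invoked in the paper, gives the sharper bound $\M(P)\le c\,\M(A)$ \emph{without} the boundary term, as well as $\M_h(P)\le c\,\M_h(A)$ directly. With these two inputs both of your obstacles evaporate: from $\M(P)\le c\,\M(A)$ one gets $\max_F|g_F|_G\le c\,\M(A)\,\varepsilon^{-k}$ with no $\M(\partial A)$ contribution, so no case analysis or cone-filling is needed and the choice of $\varepsilon$ can depend on $\M(A)$ alone; and the $h$-mass estimate on $P$ is part of the theorem, not a separate technical point. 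After this upgrade your argument collapses to the paper's short proof: $\F(A)\le\M(P)+c\varepsilon\,\nu(A)$, then $\M(P)\le \wt\eta(c\M(A)\varepsilon^{-k})\,\M_h(P)\le c\,\wt\eta(c\M(A)\varepsilon^{-k})\,\nu(A)$, and take the infimum over $\varepsilon>0$.
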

\begin{proof}
Let $h$ and $A$ as in the statement of the lemma and let $\eps>0$ to be fixed later. 
Applying the  deformation theorem~\cite[Theorem~1.1]{White1999-1}, there exist a constant $c\ge1$ only depending on $n$ and chains $P\in\PP_k^G(\R^n)$, $R\in\FF_k^G(\R^n)$ and $S\in\FF_{k+1}^G(\R^n)$ with the following properties:
\begin{enumerate}[(a)]
\item $A=P+R+\pt S$.
\item $P=\sum g_F F$, with $g_F\in G$ and where we sum over a countable set of essentially disjoint oriented $k$-cubes $F$ with side length $\eps$.  
\item[(c)]$\M(P)\le c\M(A)$.
\item[(d)]$\M_h(P)\le c\M_h(A)$.
\item[(e)]$\M(R)+\M(S)\le c\eps \N(A)$.
\end{enumerate}
 Let us denote $\nu(A):=\M_h(A)+\N(A)$. By~(a) and~(e) there holds
\be\label{prf_lem_isop_1}
\F(A)\le\M(P)+c\eps\nu(A). 
\ee
To estimate the first term, we write, using the formula~(b) and the inequality~(c),
\begin{equation}\label{massP}
\M(P)=\eps^k\sum_{F}|g_F|_G\le c\M(A).
\end{equation}
We deduce 
\be\label{prf_lem_isop_2}
\max_F |g_F|_G\le c\M(A)\eps^{-k}.
\ee 
We now set $\wt\eta(0):=0$ and for $m>0$, 
\[
\wt\eta(m):=\sup\lt\{\dfrac s{h(s)}:0<s\le m\rt\}.
\] 
From the assumption on $h$, $\wt\eta$ is nondecreasing and $\lim_{m\dw0}\wt\eta(m)=0$. We compute, using~(b),
\begin{multline*}
\M(P)
=\eps^k\sum_F|g_F|_G
\le\eps^k\sum_F\wt\eta(|g_F|_G) h(|g_F|_G)
\st{\eqref{prf_lem_isop_2}}\le\wt\eta\lt(c\M(A)\eps^{-k}\rt)\eps^k\sum_Fh(|g_F|_G)\\
=\wt\eta\lt(c\M(A)\eps^{-k}\rt)\M_h(P)\st{\text{(d)}}\le c\wt\eta\lt(c\M(A)\eps^{-k}\rt)\nu(A).
\end{multline*}
Putting this estimate in~\eqref{prf_lem_isop_1}, we get
\[
\F(A)\le c \lt[\wt\eta\lt(c\M(A)\eps^{-k}\rt)+\eps\rt]\nu(A).
\]
Then, taking the infimum over $\eps>0$ and setting for $m>0$,
\begin{equation}\label{prf_lem_isop_3}
\eta(m):=c\inf_{\eps>0} \lt[\wt\eta\lt(cm\eps^{-k}\rt)+\eps\rt],
\end{equation}
we obtain
\[
\F(A)\le\eta(\M(A))\nu(A)=\eta(\M(A))(\M_h(A)+\N(A)).
\]
Eventually, since $\wt\eta$ is nondecreasing, $\eta$ is nondecreasing. Moreover, given $\d>0$ we first fix $\eps:=\d/(2c)$, then using $\wt\eta(s)\to0$ as $s\dw0$ we have $\wt\eta(cm\eps^{-k})<\d/(2c)$ for $m\ge0$ small enough. We deduce that for such $m$'s, $\eta(m)<\d$. This establishes that $\eta(m)\to0$ as $m\dw0$ and ends the proof of the lemma.
\end{proof}
\begin{remark}\label{rem_isoper}
Assuming that $h$ is increasing and using $\M_h$ instead of $\M$ in~\eqref{massP} we can replace~\eqref{prf_lem_isop_2} by
 \[
  \max_F |g_F|_G\le  h^{-1}(c\M_h(A)\eps^{-k}).
 \]
Therefore, setting $\wt\eta_*(m):=h^{-1}(m)/m$ and then defining $\eta_*$ by~\eqref{prf_lem_isop_3} with $\wt\eta_*$ instead of $\wt\eta$ we have also the inequality
\[
 \F(A)\le\eta_*(\M_h(A))\,(\M_h(A)+\N(A)).
\]
In the particular case $h(m)=m^\alpha$ for $\alpha\in(0,1)$ this yields
\[
\F(A)\le c \M_h(A)^{\frac{1-\alpha}{\alpha+k(1-\alpha)}}(\M_h(A)+\N(A)).
\]
Sending $\alpha$ to $0$, so that in the limit $\M_h(A)=\textrm{Size}(A)$ we recover Almgren's isoperimetric inequality~\cite{Almgren} (see also~\cite[Theorem 6.2]{White1999-1}).
\end{remark}

We can now prove the main result.
\begin{proof}[Proof of Theorem~\ref{thm_decomp}]
Let $A\in\NN_k^G(\R^n)$ be a normal rectifiable $k$-chain.\medskip

\noindent
\textit{Step 1.} We first claim that: 
\[
\text{There exists }h\in C(\R_+,\R_+)\text{ increasing, concave and such that }
\lt\{
\begin{array}{l}
h(0)=0,\\ 
h'(0^+)=\oo,\\ 
\M_h(A)<\oo.
\end{array} 
\rt.
\]
With the notation of Definition~\ref{def_hmass}, $\M(A)=\int\rho\,d\h^k\restr\Sigma<\oo$ and for $h\in C(\R_+,\R_+)$,
\[
\M_h(A)=\int h(\rho)\,d\h^k\restr\Sigma.
\]
The claim is then a direct application of Lemma~\ref{lem_DLVP} established in appendix, with the nonnegative function $f= \rho$ and the measure $\mu=\h^k\restr\Sigma$.\\
Remark that $h$ is strictly subadditive so that it satisfies the assumptions of Lemma~\ref{lem_isopineq}.
\medskip

\noindent
\textit{Step 2.} For $B\in\FF_k(\R^n,G)$, we set 
\be\label{def_nu}
\nu(B):=\M_h(B)+\N(B),
\ee
and define
\be\label{def_G}
\GG:=\lt\{B\in\FF_k(\R^n,G)\text{, rectifiable and such that }\nu(B)<\oo\rt\}.
\ee
The mapping $\nu$ is obviously a norm on $\GG$ (recall~\eqref{count_sub}).  Let us show that $(\GG,\nu)$ is complete. Let $B_j$ be a Cauchy sequence in $(\GG,\nu)$. In particular, $B_j$ is a Cauchy sequence in $(\NN^G_k(\R^n),\N)$ which is complete and there exists $B_\oo\in\NN^G_k(\R^n)$ such that $\N(B_j-B_\oo)\to0$.\\
Now, there exists a $k$-rectifiable set $\Sigma\sub\R^n$ such that $B_j=B_j\restr \Sigma$ for every $j\ge1$. Using transparent notation, we write $B_j=w_j\h^k\restr \Sigma$ and we observe that the convergence $\M(B_j-B_\oo)\to0$ rewrites as 
\be\label{prf_thm_dec_1}
w_j\to w_\oo\quad\text{in }L^1\lt((\Sigma,\h^k),(G,|\cd|_G)\rt).
\ee
Similarly, denoting $|g|^*_G:=h(|g_G|)$ for $g\in G$, the fact that $B_j$ is a Cauchy sequence with respect to $\M_h$ rewrites as
\[
w_j\text{ is a Cauchy sequence in }L^1\lt((\Sigma,\h^k),(G,|\cd|^*_G)\rt).
\]
The latter group being complete, there exists $w^*_\oo$ such that
\[
w_j\to w^*_\oo\quad\text{in }L^1\lt((\Sigma,\h^k),(G,|\cd|^*_G)\rt).
\]
Besides, with~\eqref{prf_thm_dec_1} we have $w^*_\oo=w_\oo$ $\h^k$-almost everywhere on $\Sigma$ and we see that $\M_h(B_j-B_\oo)\to0$. This proves that $(\GG,\nu)$ is complete.\medskip

Now we set
\be\label{def_S}
\SS:=\lt\{A\restr S : S\sub \R^n\text{ Borel set such that }\N(A\restr S)<\oo\rt\}.
\ee
Notice that by Remark~\ref{rem:rectif} every element of $\SS$ is rectifiable.\\
We claim that for $B\in\GG$ and $A_1,A_2,\cdots\in \SS$ we have (using the notation of Lemma~\ref{lem_decomp} with~\eqref{def_nu}\eqref{def_G}\&\eqref{def_S}),
\be\label{prf_thm_dec_2}
A_j\text{ decomposition of }B\text{ as in Lemma~\ref{lem_decomp}} \quad\iff\quad A_j\text{ set-decomposition of }B.
\ee 

Assuming that $(A_1,A_2,\dots)$ is a set-decomposition of $B$, we write $A_j=B\restr S_j$ where $S_j$ is a Borel partition of $\R^n$. We have obviously $\M_h(B)=\sum \M_h(A_j)$ and by definition $\N(B)=\sum \N(A_j)$, hence $(A_1,A_2,\dots)$ is a decomposition of $B$ in the sense of Lemma~\ref{lem_decomp}.\medskip

\noindent
Conversely, if $(A_1,A_2,\cdots)$ is a decomposition of $B$ in the sense of Lemma~\ref{lem_decomp}, then $B=\sum A_j$ and $\nu(B)=\sum\nu(A_j)$. Since, by the triangle inequality,
\[
\lambda(B)\le\sum \lambda(A_j)\qquad\text{for } \lambda=\M,\ \M_h\text{ and }\M(\pt \cdot),
\]
the identity $\nu(B)=\sum\nu(A_j)$ yields
\[
\lambda(B)=\sum \lambda(A_j)\qquad\text{for } \lambda=\M,\ \M_h\text{ and }\M(\pt \cdot).
\]
Hence $\N(B)=\sum\N(A_j)$.\\
It remains to check that the $A_j$'s belong to $\SS$. With the notation of Definition~\ref{def_hmass}, we set 
\[
\mu_A:=\rho\h^k\restr\Sigma,
\] 
so that $\M(A\restr S)=\mu_A(S)$ for every Borel set $S\sub\R^n$.\\
Writing $B=A\restr S$ and similarly $A_j=A\restr S_j$ for $j\ge1$ for some Borel subsets $S,S_j\sub\R^n$, the convergence of  $\sum A_j$ towards $B$ in mass is equivalent to $\sum \un_{S_j}\to\un_S$ in $L^1(\R^n,\mu_A)$. We deduce that, up to $\mu_A$-negligible sets, $S_j$ is a partition of $S$. This proves~\eqref{prf_thm_dec_2}.\medskip

The proof that  $B$ is an atom in the sense of Lemma~\ref{lem_decomp} if and only if $B$ is set-indecomposable is similar.\medskip

\noindent 
\textit{Step 3.} By the previous step the theorem follows from Lemma~\ref{lem_decomp} (and Remark~\ref{rem_lem_decomp}~(1)) applied to $(\GG,+,\nu)$ and $\SS$ provided that~(H1)\&(H2) hold true.\medskip

First, choosing the norm $\phi=\F$, Assumption~(H2) is a direct consequence of Lemma~\ref{lem_isopineq}.\medskip

Let us establish~(H1). Let $B_1\succeq B_2\succeq\dots$ be a nonincreasing sequence. Referring to Observations~\ref{obs_}~(3.c), there exists $B_\oo\in\GG$ such that $\nu(B_j-B_\oo)\to0$. Next, reasoning as above, there exists a nonincreasing sequence $S_j$ of Borel subsets of $\R^n$ such that $B_j=A\restr S_j$. Defining $S_\oo:=\cap S_j$  and $B^*_\oo:=A\restr S_\oo$, we have by the monotone convergence theorem,
\[
\lim\M(B_j-B^*_\oo)=0.
\]
Consequently, $B_\oo=B^*_\oo=A\restr S_\oo$ and $B_\oo\in\SS$. This proves~(H1).\\
As a conclusion the theorem follows from Lemma~\ref{lem_decomp}.
\end{proof}


\section{Uniqueness of the set-decomposition of normal $n$-chains with real coefficients}
\label{S_uniq_n}
As announced in the introduction, we are able to establish the uniqueness of the maximal set-decomposition of normal codimension $0$ chains when $(G,+,|\cd|_G)$ is a closed subgroup of $(\R,+,|\cd|)$.
\begin{proposition}\label{prop_uniq_n}
If $G$ is the additive group $\R$ endowed with the standard norm, then the decomposition of a normal $G$-flat $n$-chain in $\R^n$ in set-indecomposable subchains is unique up to rearranging the sequence and adding or deleting zeros.    
\end{proposition}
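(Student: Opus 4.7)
The approach is to identify normal $\R$-valued $n$-chains in $\R^n$ with $BV$ functions, translate the set-decomposition condition via the $BV$ coarea formula into a perimeter-additivity statement on every superlevel/sublevel set, and then invoke the uniqueness of the ``measure-theoretic'' connected components from~\cite{ACMM}.

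\textit{Step 1 (identification with $BV$).} For $k=n$, taking $\Sigma=\R^n$ oriented by the standard $n$-vector $\xi=e_1\wedge\dots\wedge e_n$, every $A\in\NN_n^\R(\R^n)$ can be written as $A=u\,\xi\,\h^n$ for some Borel $u:\R^n\to\R$. The identities $\M(A)=\|u\|_{L^1}$ and $\M(\pt A)=|Du|(\R^n)$ yield $A\in\NN_n^\R(\R^n)\iff u\in BV(\R^n)\cap L^1(\R^n)$, and $A\restr S=u\un_S\,\xi\,\h^n$ for any Borel $S\sub\R^n$.

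\textit{Step 2 (coarea plus ACMM).} Given a Borel partition $(S_j)$ of $\R^n$ and $A_j=A\restr S_j$, the identity $\M(A)=\sum_j\M(A_j)$ is automatic, so $\N(A)=\sum_j\N(A_j)$ is equivalent to $|Du|(\R^n)=\sum_j|D(u\un_{S_j})|(\R^n)$. The $BV$ coarea formula combined with the elementary computation
\[
P(\{u\un_{S_j}>t\})=\begin{cases}P(S_j\cap\{u>t\}), & t>0,\\ P(S_j\cap\{u<t\}), & t<0,\end{cases}
\]
rewrites this additivity as the pair of conditions
\[
P(\{u>t\})=\sum_j P(S_j\cap\{u>t\})\text{ for a.e.\ }t>0,\qquad P(\{u<t\})=\sum_j P(S_j\cap\{u<t\})\text{ for a.e.\ }t<0.
\]
By the rigidity of perimeter-additive partitions established in~\cite[Theorem~1]{ACMM}, each $S_j\cap\{u>t\}$ (resp.\ $S_j\cap\{u<t\}$) must be, up to $\h^n$-null sets, an essential union of $M$-connected components of $\{u>t\}$ (resp.\ of $\{u<t\}$). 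Since the $M$-connected components are unique, each component is thereby assigned to a single index $j$.

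\textit{Step 3 (common refinement forces uniqueness).} Let $(A_j)$ and $(A_k')$ be two decompositions of $A$ into set-indecomposable subchains, coming from Borel partitions $(S_j)$ and $(S_k')$. Set $T_{j,k}:=S_j\cap S_k'$. By Step~2, for a.e.\ $t>0$ each $M$-component of $\{u>t\}$ is contained in a unique $S_j$ and a unique $S_k'$, hence in a unique $T_{j,k}$; therefore $(T_{j,k}\cap\{u>t\})_{j,k}$ is a partition of $\{u>t\}$ into unions of $M$-components, and $P(\{u>t\})=\sum_{j,k}P(T_{j,k}\cap\{u>t\})$. The analogous identity on the negative side and integration in $t$ give $|Du|(\R^n)=\sum_{j,k}|D(u\un_{T_{j,k}})|(\R^n)$, so $(A\restr T_{j,k})_{j,k}$ is itself a set-decomposition of $A$. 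By the triangle inequality, $(A\restr T_{j,k})_k$ is then a set-decomposition of $A_j$ (and symmetrically $(A\restr T_{j,k})_j$ of $A_k'$); set-indecomposability of $A_j$ yields a unique $k=\sigma(j)$ with $A\restr T_{j,\sigma(j)}=A_j$ (the other terms vanishing), and symmetrically a map $\tau$ with $A\restr T_{\tau(k),k}=A_k'$. The maps $\sigma$ and $\tau$ are mutually inverse bijections between the nonzero atoms of the two decompositions, satisfying $A_j=A_{\sigma(j)}'$, which is exactly uniqueness up to rearrangement and addition/deletion of zeros.

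\textit{Main obstacle.} The delicate point is Step~2: transferring the $\N$-additivity $\N(A)=\sum_j\N(A_j)$ into the combinatorial statement that each partition piece is an essential union of $M$-indecomposable components. This rests on the coarea formula (to pass from the BV function to each level set) together with the uniqueness/rigidity part of the ACMM decomposition of sets of finite perimeter. Once this is granted, Step~3 is a purely formal common-refinement argument using the set-indecomposability of the atoms.
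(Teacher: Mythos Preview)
Your proof is correct and rests on the same key ingredients as the paper's: the identification $\NN_n^\R(\R^n)\simeq BV(\R^n)$, the coarea formula, and the rigidity part of~\cite[Theorem~1]{ACMM} (namely, that a perimeter-additive Borel partition of a set of finite perimeter must be a coarsening of its $M$-connected components). The difference is in the endgame. The paper constructs explicitly a canonical finest partition $\BB_0$ of $\{f\ne0\}$ by declaring two points equivalent when they are density points of the same $M$-component of some level set $E_t$; it then checks that $\BB_0$ is itself a decomposition and that $\BB_0\Subset\BB$ for every decomposition $\BB$. You instead take two given maximal decompositions, form their common refinement $T_{j,k}=S_j\cap S'_k$, use Step~2 on each factor together with the converse direction of~\eqref{prf_uniq_0} to see that $(T_{j,k})$ is again a set-decomposition, and let indecomposability of the $A_j$ and $A'_k$ collapse the refinement. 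Your route is slightly more economical for the statement at hand, since existence is already granted by Theorem~\ref{thm_decomp}; the paper's route has the byproduct of re-deriving existence in this codimension-zero case and exhibiting the maximal decomposition concretely in terms of the $M$-components of the level sets.
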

The result also applies to $G=\Z$ by the embedding $\NN_n^\Z(\R^n)\hookrightarrow\NN_n^\R(\R^n)$. 
The proof of the proposition is based  on the coarea formula for functions of bounded variations and on the uniqueness of the decomposition of a set of finite perimeter in its measure theoretic connected components provided by~\cite[Theorem~1]{ACMM}. We first reformulate the proposition as a result about functions of bounded variation (Theorem~\ref{thm_dec_BV} below). 

It is well-known that the space of $\R$-flat $n$-chains in $\R^n$ in the sense of~\cite{Fleming66} identifies with a subspace of $k$-currents in $\R^n$, namely, the closure of the space of normal $n$-currents with respect to the norm
\[
W(T):=\sup\lt<T,\om\rt>,
\] 
where the supremum is taken over the smooth and compactly supported differential $n$-forms $\om$ over $\R^n$ such that $\|\om\|_\oo\le1$. This space obviously identifies isometrically with $L^1(\R^n)$ and denoting $f_A$ the function corresponding to a $\R$-flat $n$-chain $A$, we have $\F(A)=\M(A)=\|f_A\|_{L^1}$ and $\pt A$ is the $(n-1)$-current $\sum_{i=1}^n \pt_{x_i} f_A\h^n\, e_{\ov i}$ where  $e_1,\dots,e_n$ is the standard basis of $\R^n$ and 
\[
e_{\ov i}:= e_1\we\dots\we e_{i-1}\we e_{i+1}\we\dots\we e_n.
\]  
Using the Hodge star operator $e_{\ov i}\mapsto e_i$, $\pt A$ identifies with the distribution $\nb f_A$. Moreover, the $n$-current $A$ is normal if and only if $f_A$ is a function with bounded variation and we have the identity $\M(\pt A)=|\nb f_A|_{TV}$ where here and below the total variation of  a vector valued Borel measure $\mu\in\MM(\R^n,\R^d)$ is computed with respect to the Euclidian norm in $\R^d$, that is, 
\[
|\mu|_{TV}:=\sup\lt\{\sum_{S_j} \lt\|\mu(S_j)\rt\|_{\ell^2(\R^d)}: S_j\text{ Borel partition of }\R^n\rt\}.
\]
Next, given a Borel set $S\sub\R^n$, we have $f_{A\restr S}=\un _S f_A$ and a set-decomposition of $A$ corresponds to a finite or countable Borel partition $\BB$ of $\R^n$ such that 
\be
\label{nbfA_decomp}
|\nb f_A|_{TV}=\sum_{S\in\BB}\lt|\nb [\un_{S} f_A]\rt|_{TV}.  
\ee
Denoting $\Om:=\{x\in\R^n:f_A(x)\ne0\}$, we have $f_A=\un_\Om f_A$ so we may only consider Borel partitions of $\Om$.  A set-decomposition corresponds to a Borel partition of  $\Om$ satisfying~\eqref{nbfA_decomp} and $A$ is indecomposable if for every Borel set $S\sub\Om$ there holds
\[
\lt|\nb f_A\rt|_{TV}=\lt|\nb [\un_Sf_A]\rt|_{TV}+\lt|\nb[\un_{\R^n\sm S}f_A]\rt|_{TV}\quad\implies\quad \h^n(S)=0\text{ or }\h^n(\Om\sm S)=0.
\]
Let us state in terms of $BV$-functions both the existence result of Theorem~\ref{thm_decomp} in the case $k=n$, $G=\R$ and the uniqueness result (still to be proved) of Proposition~\ref{prop_uniq_n}.
\begin{theorem}\label{thm_dec_BV}
Let $f\in BV(\R^n)$, there exists a Borel partition $\BB$  of $\Om:=\lt\{x\in\R^n:f(x)\ne0\rt\}$ such that 
\be\label{f_decomp}
\lt|\nb f\rt|_{TV}=\sum_{S\in\BB}\lt|\nb [\un_Sf]\rt|_{TV}
\ee
and such that for any Borel partition $\BB'$ with the same properties and any $S\in\BB$ we have $\h^n(S\sm S')=0$ for some $S'$ in $\BB'$. In other words, $\BB$ is the finest Borel partition of $\Om$ satisfying~\eqref{f_decomp}.
\end{theorem}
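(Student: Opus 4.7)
The plan is to take $\BB$ to be the Borel partition underlying a decomposition of $f$ into set-indecomposable components, whose existence is guaranteed by Theorem~\ref{thm_decomp} applied to the $n$-chain corresponding to $f$ under the identification $\NN_n^\R(\R^n)\simeq BV(\R^n)$ recalled just before the theorem. The task then reduces to showing that any other Borel partition $\BB'$ satisfying~\eqref{f_decomp} is a coarsening of $\BB$, in the sense that each $S\in\BB$ is contained, up to $\h^n$-null sets, in some $S'\in\BB'$.

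The core intermediate step is a \emph{Common Refinement Lemma}: if $\BB_1,\BB_2$ are two Borel partitions of $\Om$ both satisfying~\eqref{f_decomp}, then so does the common refinement $\BB_1\wedge\BB_2:=\{S_1\cap S_2:S_1\in\BB_1,\,S_2\in\BB_2\}$ (after discarding $\h^n$-null cells). To prove this, I would invoke the coarea formula for BV functions to rewrite $|\nb f|_{TV}=\int_0^\oo P(\{f>t\})\,dt+\int_{-\oo}^0 P(\{f<t\})\,dt$, and observe that for $t<0$ one has $\{\un_S f>t\}^c=S\cap\{f\le t\}$, so $|\nb(\un_S f)|_{TV}$ decomposes analogously. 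Hence~\eqref{f_decomp} is equivalent to $\{S\cap\{f>t\}\}_{S\in\BB}$ being a perimeter-additive Borel partition of $\{f>t\}$ for a.e.\ $t>0$, and the analogous statement for $\{f<t\}$ when $t<0$. By ACMM's Theorem~1, for a.e.\ such $t$ any perimeter-additive Borel partition of $\{f>t\}$ is a coarsening of the canonical decomposition $\CC_t^+$ into measure-theoretic connected components. Moreover, since the components of $\CC_t^+$ have pairwise $\h^{n-1}$-disjoint reduced boundaries, every coarsening of $\CC_t^+$ is itself perimeter-additive; hence the common refinement of two coarsenings is still a coarsening (an intersection of unions of components is a union of components), and thus perimeter-additive. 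Integrating in $t$ yields the lemma.

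The conclusion then proceeds as follows. Let $\BB_*$ be an indecomposable decomposition given by Theorem~\ref{thm_decomp} and let $\BB'$ be any valid partition. The Common Refinement Lemma gives $|\nb f|_{TV}=\sum_{(S_*,S')\in\BB_*\times\BB'}|\nb(\un_{S_*\cap S'}f)|_{TV}$. Combined with the triangle inequality $|\nb(\un_{S_*}f)|_{TV}\le\sum_{S'\in\BB'}|\nb(\un_{S_*\cap S'}f)|_{TV}$ and the identity $|\nb f|_{TV}=\sum_{S_*}|\nb(\un_{S_*}f)|_{TV}$, equality must hold termwise. Thus $\{\un_{S_*\cap S'}f\}_{S'\in\BB'}$ is a valid set-decomposition of the indecomposable chain $\un_{S_*}f$; by set-indecomposability, exactly one piece equals $\un_{S_*}f$ up to $\h^n$-null and the rest vanish, yielding $\h^n(S_*\setminus S')=0$ for a unique $S'\in\BB'$.

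The main obstacle is the ``every coarsening of $\CC_t^+$ is perimeter-additive'' claim, which is the structural backbone of ACMM's theorem and rests on the essential disjointness of the reduced boundaries of the measure-theoretic connected components. Once this is invoked carefully (together with its straightforward consequence that intersections of such coarsenings remain coarsenings), the rest of the argument — coarea translation plus the triangle-inequality telescoping above — is essentially formal.
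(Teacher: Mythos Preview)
Your argument is correct, but it takes a genuinely different route from the paper's proof.

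\textbf{What you do.} You take the existence of an indecomposable set-decomposition $\BB_*$ from Theorem~\ref{thm_decomp}, and then prove minimality of $\BB_*$ via a Common Refinement Lemma: if $\BB_1$ and $\BB_2$ both satisfy~\eqref{f_decomp} then so does $\BB_1\wedge\BB_2$. You obtain the lemma by translating~\eqref{f_decomp} through the coarea formula into perimeter-additivity at almost every level, then invoking the characterization in~\cite[Theorem~1]{ACMM} (both directions of~\eqref{prf_uniq_0}) to see that valid partitions of a level set are exactly the coarsenings of its M-connected components, so that the common refinement of two coarsenings is again a coarsening. Finally, applying indecomposability of each $\un_{S_*}f$ to the refinement $\{S_*\cap S'\}_{S'\in\BB'}$ forces $S_*\subset S'$ up to null sets.

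\textbf{What the paper does.} The paper does \emph{not} invoke Theorem~\ref{thm_decomp} at all. Instead it constructs the canonical partition $\BB_0$ directly from the level sets: it shows (Step~1) that any valid $\BB$ must be coarser than every $\BB_t$ (the ACMM decomposition of $E_t$), and then (Step~2) builds $\BB_0$ as the quotient of $\Om$ by the equivalence relation ``$x\sim y$ iff $x,y$ are density points of the same $F\in\BB_t$ for some $t$''. It then verifies that $\BB_0$ itself satisfies~\eqref{f_decomp}.

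\textbf{Comparison.} Both proofs rest on the same engine---coarea plus the full force of~\cite[Theorem~1]{ACMM}---but package it differently. Your approach is more modular: it reuses Theorem~\ref{thm_decomp} and isolates a clean intermediate statement (the Common Refinement Lemma) that has independent interest. The paper's approach is self-contained for the case $k=n$, $G=\R$: it gives an explicit geometric description of the finest partition (as equivalence classes under level-set connectivity) without any appeal to the abstract decomposition machinery of Section~\ref{S_decomp}. A minor technical point in your sketch: when integrating back from the level-set identity to conclude~\eqref{f_decomp} for $\BB_1\wedge\BB_2$, you should note that each $\un_{S_1\cap S_2}f$ is in $BV$---this follows since $\int P(E_t\cap S_1\cap S_2)\,dt\le\int P(E_t)\,dt<\infty$---before applying coarea in that direction.
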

\begin{proof}[Proof of Theorem~\ref{thm_dec_BV}]~\\
\textit{Step 0 (conventions).} In this proof we  identify Borel subsets of $\Om$ which only differ by a Lebesgue null set and we make an abuse of notation by writing $S\subset S'$ if $S\sm S'$ is a null set. With this convention, given $\BB$, $\BB'$ two families of Borel subsets of $\Om$, we write $\BB\Subset\BB'$ whenever 
\[
\text{for every element } S\in\BB \text{ there exists }S'\in\BB'\text{ such that }S\subset S'.
\]
This defines a partial order on the families of Borel subsets of $\R^n$. The theorem states that the collection of Borel partitions of $\Om$ satisfying~\eqref{f_decomp} admits a least element for the relation $\Subset$. Similarly, with this vocabulary,~\cite[Theorem~1]{ACMM} states that a set of finite perimeter $E\sub\R^n$ admits a Borel partition $\BB_E$ whose elements are called the M-connected components of $E$ such that 
\[
P(E)=\sum_{F\sub \BB_E} P(F)
\]
and such that if $\BB_E'$ is any other Borel partition of $E$, 
\be\label{prf_uniq_0}
P(E)=\sum_{F\sub \BB_E'} P(F)\quad\iff\quad\BB_E\Subset\BB_E'.\smallskip
\ee

\noindent
\textit{Step 1.}  Let $f\in BV(\R^n)$. For $t\in\R\sm\{0\}$, we set 
\[
E_t:=\begin{cases} 
\lt\{x\in\R^n :f(x)>t\rt\}&\text{if }t>0,\\
\lt\{x\in\R^n :f(x)<t\rt\}&\text{if }t<0.
\end{cases}
\]
For almost every $t$, $E_t$ is a set of finite perimeter and denoting by $P(E)$ the perimeter of $E\sub\R^n$ (that is the $\h^{n-1}$-measure of the reduced boundary of $E$), the mapping $t\mapsto P(E_t)$ is measurable and by the coarea formula~\cite[Theorem~3.40]{Am_Fu_Pal},
\be\label{prf_uniq_1}
\lt|\nb f\rt|_{TV}=\int_\R P(E_t)\, dt.
\ee
Le us denote by $\BB_t$ the collection of the M-connected components of $E_t$ given by~\cite[Theorem~1]{ACMM}. For almost every $t\in \R$, $\BB_t$ is a finite or countable Borel partition of $E_t$ and 
\[
P(E_t)=\sum_{F\in\BB_t}P(F).\smallskip
\]
Let $\BB$ be a Borel partition of $\Om$ such that~\eqref{f_decomp} holds true. Since for $t\in\R\sm\{0\}$ and $S\in\BB$,
\[
E_t\cap S=\begin{cases} 
\lt\{x\in\R^n :\un_S(x)f(x)>t\rt\}&\text{if }t>0,\\
\lt\{x\in\R^n :\un_S(x)f(x)<t\rt\}&\text{if }t<0,
\end{cases}
\]
we have again by the coarea formula,
\[
\lt|\nb[\un_Sf]\rt|_{TV}=\int P(E_t\cap S)\, dt.
\]
With~\eqref{f_decomp} and~\eqref{prf_uniq_1} this leads to 
\[
\int_\R P(E_t)\, dt = \int_\R \sum_{S\in\BB}P(E_t\cap S)\, dt.
\]
Since for every $t$, $E_t=\cup_S(E_t\cap S)$ we have $P(E_t)\le\sum_SP(E_t\cap S)$, the preceding inequality enforces 
\[
P(E_t)=\sum_{S\in\BB}P(E_t\cap S)\qquad\text{for almost every }t\in\R\sm\{0\}. 
\]
By~\eqref{prf_uniq_0} this implies $\BB_t\Subset \{E_t\cap S:S\in\BB\}\Subset\BB$ and we conclude that 
\be\label{prf_uniq_2}
\text{for almost every }t\in\R\sm\{0\}\quad\BB_t\Subset\BB.\smallskip
\ee
\noindent
\textit{Step 2.} Now we claim that the collection $\BB_t$ for $t\ne0$ admits a $\Subset$-maximal element $\BB_0$. Indeed, for $x,y\in\Om$ let us write $x\sim y$ whenever there exists $t=t_{x,y}\ne0$ such that $E_t$ is a set of finite perimeter and $x$ and $y$ are both points of density of the same $F\in\BB_t$. Remark that if $t_1<0<t_2$ then for $S_1\in\BB_{t_1}$, $S_2\in\BB_{t_2}$ there holds $S_1\cap S_2=\void$ and  if $0<t_1<t_2$ or $t_2<t_1<0$ then $\BB_{t_2}\Subset\BB_{t_1}$. We deduce that $\sim$ defines an equivalence relation on the set
\[
\Om_0:=\bigcup_{t\ne0}\bigcup_{F\in\BB_t}\lt\{x\text{ point of density of }F\rt\}
\]
which is of full measure in $\Om$. Moreover we can impose that the $t_{x,y}$'s above lie in a countable set $T$ such that $\sup T\cap(-\oo,0)=\inf T\cap(0,+\oo)=0$. It follows that each equivalence class of $\Om_0/\!\!\sim$ writes as countable union of sets with finite perimeter and in particular, up to negligible sets, $\BB_0:=\Om_0/\!\!\sim$ is a Borel partition of $\Om_0$.

By construction and~\eqref{prf_uniq_2} we have $\BB_0\Subset\BB$ for any Borel partition $\BB$ of $\Om$ satisfying~\eqref{f_decomp}. To end the proof of the theorem we still have to check that $\BB=\BB_0$ satisfies~\eqref{f_decomp}.\\
For $t\ne0$ and $S\in\BB_0$, we define
\[
S_t:=S\cap E_t=\begin{cases} 
\lt\{x\in\R^n :\un_S(x)f(x)>t\rt\}&\text{if }t>0,\\
\lt\{x\in\R^n :\un_S(x)f(x)<t\rt\}&\text{if }t<0,
\end{cases}
\]
On the one hand, we have for almost every $t$ and any $S\in\BB_0$,
\be\label{prf_uniq_3}
\lt|\nb[\un_Sf]\rt|_{TV} = \int_\R P(S_t)\, dt.
\ee
On the other hand, defining $\BB'_t:=\{S_t:S\in\BB_0\}$ we have by definition of $\BB_0$ that for almost every $t\in\R\sm\{0\}$, $\BB'_t$ is a partition of $E_t$ with $\BB_t\Subset\BB'_t$. We deduce from~\eqref{prf_uniq_0} that
\[
P(E_t)=\sum_{S'\in\BB'_t}P(S')=\sum_{S\in\BB_0}P(S_t). 
\]
Integrating over $t\in\R$ and using~\eqref{prf_uniq_3} we get
\[
\lt|\nb f\rt|_{TV}=\sum_{S\in\BB_0}\int P(S_t)\, dt=\sum_{S\in\BB_0}\lt|\nb[\un_Sf]\rt|_{TV}.
\] 
So $\BB=\BB_0$ satisfies~\eqref{f_decomp} and the theorem is established.
\end{proof}
We believe that Proposition~\ref{prop_uniq_n} generalizes to any Abelian normed groups $G$ but addressing the general case would take us too far afield. An idea would be to identify the normal $n$-chain $A$ with a $G$-valued function $f_A:\R^n\to G$ such that $f_A$ is of bounded variation in the sense of~\cite{Ambrosio1990}. Such identification is established in~\cite[Theorem~4.1]{HdP2014} but to complete the program we need the identity 
\be\label{MTV}
\M(\pt A)=|\nb f_A|_{TV}:=\sup \sum_j |\nb [\phi_j\circ f_A]|(S_j),
\ee
where the supremum is taken over the countable Borel partitions $S_j$ of $\R^n$ and over the sequences of  1-Lipschitz continuous functions $\phi_j:G\to\R$. Unfortunately, the result of~\cite{HdP2014} only provides a bilipschitz group isomorphism $A\in\NN_n^G(\R^n)\to BV(\R^n,G)$ and we only have at hand a two-sided estimate in place of the identity~\eqref{MTV}.

\appendix
\section{Higher integrability lemma}
\label{App1}
\begin{lemma}\label{lem_DLVP} 
Let $(\Om,\mu)$ be a measure space. Given a $\mu$-integrable function $f:\Om\to\R_+$, there exists $h:\R_+\to\R_+$, continuous, increasing, concave such that $h(0)=0$, $h'(0^+):=\lim_{s\dw 0} h(s)/s=\oo$ and 
\[
\int h(f)\,d\mu<\oo.
\]
\end{lemma}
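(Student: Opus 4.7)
The plan is to produce $h$ by prescribing its derivative: write $h(s)=\int_0^s\psi(t)\,dt$ for a nonincreasing $\psi:(0,\infty)\to(0,\infty)$ with $\psi(0^+)=\infty$. With this representation, $h(0)=0$, $h$ is absolutely continuous (hence continuous), strictly increasing (since $\psi>0$), and concave (since $\psi$ is nonincreasing); the condition $h'(0^+)=\infty$ translates directly to $\psi(0^+)=\infty$. Applying Tonelli to the nonnegative integrand $(x,t)\mapsto\psi(t)\un_{\{t<f(x)\}}$ yields
\[
\int_\Om h(f)\,d\mu = \int_0^\infty \psi(t)\,g(t)\,dt, \qquad g(t):=\mu(\{f>t\}).
\]
The layer-cake formula gives $\int_0^\infty g(t)\,dt = \int f\,d\mu < \infty$; since $g$ is nonincreasing it is finite on $(0,\infty)$, and $\int_0^t g\dw0$ as $t\dw0$. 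It thus remains to construct $\psi$ with $\int_0^\infty \psi g<\infty$ and $\int_0^{T}\psi<\infty$ for every $T>0$ (so that $h$ is real-valued everywhere).

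For this, pick a sequence $t_n\dw 0$ satisfying simultaneously
\[
\int_0^{t_n}g(t)\,dt\le 2^{-n}\qquad\text{and}\qquad t_n\le 2^{-n},\qquad n\ge 1,
\]
which is possible by the previous observation. Define $\psi(t):=n$ on $(t_{n+1},t_n]$ for $n\ge 1$ and $\psi(t):=1$ on $(t_1,\infty)$. Then $\psi$ is a nonincreasing strictly positive step function with $\psi(t)\to\infty$ as $t\dw0$. The first constraint gives
\[
\int_0^\infty \psi g\,dt \le \int_{t_1}^\infty g\,dt + \sum_{n\ge 1} n\int_{t_{n+1}}^{t_n}g\,dt \le \int f\,d\mu + \sum_{n\ge 1} n\,2^{-n} < \infty,
\]
and the second constraint gives
\[
\int_0^{t_1}\psi(t)\,dt = \sum_{n\ge 1} n\,(t_n-t_{n+1}) \le \sum_{n\ge 1} n\,2^{-n} <\infty,
\]
so $h$ is finite on all of $\R_+$.

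The main conceptual step is the Fubini reduction, which cleanly separates the measure-theoretic input ($g$ integrable near $0$) from the construction of $h$, now a one-variable problem. The only mildly delicate implementation point is the joint choice of $(t_n)$: the integrability condition $\int_0^{t_n}g\le 2^{-n}$ alone controls $\int\psi g$ but permits pathologically slow decay of $t_n$, which would make $h$ identically $+\infty$ on $(0,t_1)$; imposing $t_n\le 2^{-n}$ in addition costs nothing and secures finiteness of $\int_0^{t_1}\psi$. Once $\psi$ and $h$ are built this way, all the required properties of $h$---continuity, monotonicity, concavity, $h(0)=0$, $h'(0^+)=\infty$, and $\int h(f)\,d\mu<\infty$---are immediate from the construction, and the lemma is proved.
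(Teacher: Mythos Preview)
Your proof is correct. Both your argument and the paper's define $h$ as the antiderivative of a nonincreasing positive step function, but the implementations differ. The paper discretizes the range of $f$ dyadically, sets $a_j:=\int_{\{2^{-j-1}<f\le 2^{-j}\}}f\,d\mu$, produces an unbounded sequence $b_j$ with $\sum a_jb_j<\infty$, and then tames it to a sequence $c_j$ satisfying the growth control $c_{j+i}\le 2^{i/2}c_j$ so that the resulting $h$ is finite near $0$; the pointwise bound $h(f)\le(2+\sqrt2)c_jf$ on each dyadic level set then yields $\int h(f)\,d\mu<\infty$. Your route is more conceptual: the Tonelli/layer-cake identity $\int h(f)\,d\mu=\int_0^\infty\psi(t)\,\mu(\{f>t\})\,dt$ reduces the problem to a one-variable question about the integrable distribution function $g$, and the adaptive choice of breakpoints $t_n$ (with the auxiliary constraint $t_n\le 2^{-n}$ to keep $h$ real-valued) replaces the paper's growth-control step entirely. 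Your argument is shorter and makes the mechanism---that only the behaviour of $\mu(\{f>t\})$ near $t=0$ matters---more transparent; the paper's hands-on construction buys nothing extra here.
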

\begin{proof}
This is the consequence of the following simple higher summability property for absolutely converging series. Namely, if $a_j\ge0$ is such that $\sum a_j<\oo$ then there exists a sequence $1=b_0<b_1<\dots <b_j<b_{j+1}<\dots\to\oo$ such that $\sum a_j b_j<\oo$.\footnote{This is easy to establish. Build an increasing sequence of integers $m_1,m_2,m_3,\dots$ such that $\sum_{i>m_l}a_i\le 2^{-l}$ for $l\ge1$ then set $b_0:=1$, $b_{m_l}:=l$ for $l\ge 1$ and complete the definition of the $b_i$'s  by affine interpolation.} Applying this to the series 
\[
\sum_{j\ge0} \int_{\{2^{-j-1}<f \le 2^{-j}\}} f\,d\mu=\int f\,d\mu<\oo,
\]
we get $1=b_0<b_1<\dots<b_j<b_{j+1}<\dots\to\oo$ such that,
\[
\sum_{j\ge0}b_j \int_{\{2^{-j-1}<f \le 2^{-j}\}} f\,d\mu<\oo.
\] 
Defining $c_0:=1$ and then recursively $c_j:=\min\lt(\sqrt2\,,{b_j}/{b_{j-1}}\rt)c_{j-1}$ for $j\ge1$,
we have $1=c_0<\dots<c_{j-1}<c_j<\dots$ and by induction $c_j\le b_j$ for $j\ge0$. Consequently, 
\be
\label{prf_lem_DLVP_1}
\sum_{j\ge0}c_j \int_{\{2^{-j-1}<f \le 2^{-j}\}} f\,d\mu<\oo.
\ee
Notice also that by induction there holds, 
\be\label{prf_lem_DLVP_2}
c_{j+i}\le2^{i/2}c_j\qquad\text{for }i,j\ge 0.
\ee
Moreover, 
\be\label{prf_lem_DLVP_3}
c_j=\prod_{i=1}^{j} \min\lt(\sqrt 2\,,\dfrac{b_i}{b_{i-1}}\rt)\quad{\st{j\up\oo}\longto}\ \oo.
\ee
Indeed, denoting $\Lambda:=\lt\{i\ge1:b_i\ge \sqrt 2 b_{i-1}\rt\}$.
If on the one hand $\Lambda$ is finite, then for $j\ge j_0:=\max \Lambda$, $c_j=(c_{j_0}/b_{j_0})b_j$ hence $c_j\to\oo$ as $j\to\oo$. If on the other hand, $\Lambda$ is infinite, there holds
\[
c_j\ge\lt(\sqrt 2\rt)^{|\Lambda\cap[1,j]|}\ \st{j\up\oo}\longto\ \oo.
\]
Summing up, we have $1=c_0<\dots< c_j<c_{j+1}<\dots \to\oo$ and~\eqref{prf_lem_DLVP_1},\eqref{prf_lem_DLVP_2}\&\eqref{prf_lem_DLVP_3} hold.
Let us define $g:(0,\oo)\to[1,\oo)$ by
\[
g(s):=
\begin{cases}
\phantom{cc}c_j&\text{for }s\in(2^{-j-1},2^{-j}],\ j\ge1,\\
c_0=1&\text{for }s>1/2.
\end{cases}
\]
We notice that $g$ is nonincreasing and that by~\eqref{prf_lem_DLVP_3} $g(s)\to\oo$ as $s\dw0$. Let us set, for $s\ge0$,
\[
h(s):=\int_0^s g(t)\,dt,
\]
(observe that by~\eqref{prf_lem_DLVP_2}, $g(s)\le s^{-1/2}$ for $0<s\le1/2$ so that $h$ is well-defined).
We have that $h:\R_+\to\R_+$ is continuous, concave, increasing and such that $h(0)=0$ and $h'(0^+)=\oo$. Eventually, we compute for $j\ge1$ and $s\in (2^{-j-1},2^{-j}]$, 
\begin{multline*}
h(s)\,\le h(2^{-j}) = \sum_{i\ge j}2^{-i-1}c_i =2^{-j-1}\sum_{i\ge 0}2^{-i}c_{j+i}  \\
\st{\eqref{prf_lem_DLVP_2}}\le 2^{-j-1}c_j\sum_{i\ge 0}2^{-i/2}=(2+\sqrt2) 2^{-j-1}c_j\le(2+\sqrt2) s c_j.
\end{multline*}
Consequently, $h\circ f \le (2+\sqrt2)c_j f$ in the domain $\{2^{-j-1}<f \le 2^{-j}\}$. We conclude by~\eqref{prf_lem_DLVP_1} that $h\circ f$ is $\mu-$integrable which proves the lemma.
\end{proof}

\section{A proof of Theorem~\ref{thm_decomp} assuming that $G=(\R,+,|\cd|)$}
\label{App2}
We give here an alternative proof of Theorem~\ref{thm_decomp}, in the spirit to that of~\cite[Theorem~1]{ACMM}, assuming that $G$ is boundedly compact.  In fact for concreteness we assume that $G=\R$ and to avoid technicalities that $A$ is compactly supported. The important point is that the additional assumption ensures that the closure/compactness property~\eqref{notassumed} holds true.  As it is essential to Step~1 below this shows that a new approach were needed to deal with the general case.

\begin{proof}
Let $A\in\NN_k^\R(\R^n)$ be rectifiable and compactly supported. We introduce the set
\[
\DD:=\lt\{A_j \text{ set-decomposition of }A \text{ such that } \N(A_j) \text{ is nonincreasing}\rt\}.
\]
This set is not empty as it contains $(A,0,\dots)$. Let us endow the space of sequences $v_j\in\R$ indexed by $j\ge1$ with the lexicographic ordering, \textit{i.e.} $(v'_j)\sslo(v_j)$ if there exists $j_0\ge1$ such that $v'_j=v_j$ for $1\le j<j_0$ and $v'_{j_0}<v_{j_0}$. We consider the optimization problem
\be\label{prf_thm_decR_1}
(v_j):=\inf_{\text{\scriptsize lex.}} \lt\{(\N(A_j)) : A_j\in\DD\rt\}.
\ee
Since the $\N(A_j)$'s are nonnegative, the infimum is well defined and $v_j\ge0$ for every $j$.

 We claim that if $A_j$ is a minimizer  of~\eqref{prf_thm_decR_1} then each $A_j$ is  set-indecomposable. To see this, we assume by contradiction that for some $j_0\ge1$, $A_{j_0}$ admits a nontrivial set-decomposition $(A_{j_0}',A_{j_0}'')$, that is $\max(\N(A_{j_0}'),\N(A_{j_0}''))<\N(A_{j_0})$. Substituting $(A_{j_0}',A_{j_0}'')$ for $A_{j_0}$ in the sequence $A_j$ and then rearranging the terms in decreasing order of $\N$-norms we obtain a set-decomposition $\tilde A_j$ of $A$ with $\tilde A_j= A_j$ for $j<j_0$ and $\N(\tilde A_{j_0})<\N(A_{j_0})$. This contradicts the minimality of $A_j$.\medskip

To complete the proof we establish that~\eqref{prf_thm_decR_1} does admit a minimizer. \medskip

\noindent
\textit{Step 1. ($\F$-compactness of minimizing sequences).}

Let $(A_j^m)_{m\ge1}$ be a minimizing sequence for~\eqref{prf_thm_decR_1}. Let us first fix $j\ge1$. The sequence $(A_j^m)_m$ satisfies $\N(A_j^m)\le\N(A)$ and $\supp A_j^m\sub\supp A$ so by~\eqref{notassumed} there exists a normal chain $A_j$ such that up to extraction, $A_j^m\to A_j$ in $\F$-norm. Using a diagonal argument, we may assume that $A_j^m\to A_j$ as $m\up\oo$ in $\F$-norm for every $j\ge1$. Moreover, by lower semicontinuity of the masses under $\F$-convergence,
\be\label{prf_thm_decR_2} 
\N(A_j)\le\liminf_{m\up\oo}\N({A_j^m}).
\ee
By Lemma~\ref{lem_DLVP} there exists a cost function $h\in C(\R_+,\R_+)$, increasing, concave such that $h(0)=0$, $h'(0^+)=\oo$ and $\M_h(A)<\infty$ (in particular, $h$ is strictly subadditive). We deduce from~\cite[Propositions~2.6\&2.7]{CdRMS2017} that the $A_j$'s are rectifiable and
\be\label{prf_thm_decR_3} 
\M_h(A_j)\le\liminf_{m\up\oo}\M_h({A_j^m}).
\ee
Moreover by Lemma~\ref{lem_isopineq} there exists a nondecreasing function $\eta:\R_+\to\R_+$ with $\eta(m)\dw0$ as $m\dw0$ such that~\eqref{eta_intro} holds with $A'=A^m_j$ for any $j,m\ge1$.\medskip

\noindent
\textit{Step 2. (Uniform $\F$-summability and mass identities).}
Let $j_0,m\ge1$. Since $\N(A)=\sum_j\N(A_j^m)$ and $(\N(A_j^m))_j$ is nonincreasing, we have for $j\ge j_0$,
\[
\M(A_j^m)\le\N(A_j^m)\le\N(A_{j_0}^m)\le\dfrac1{j_0}\sum_{i=1}^{j_0}\N(A_i^m)\le\dfrac{\N(A)}{j_0}.
\]
Using~\eqref{eta_intro}, we compute,
\begin{multline*}
\sum_{j\ge j_0}\F(A_j^m)
\le\sum_{j\ge j_0} \eta\lt(\M(A_j^m)\rt) \lt(\N(A_j^m)+\M_h(A_j^m)\rt)\\
\le\eta\lt(\dfrac{\N(A)}{j_0}\rt)\sum_{j\ge j_0}\lt(\N(A_j^m)+\M_h(A_j^m)\rt) 
 \le\eta\lt(\dfrac{\N(A)}{j_0}\rt)\lt(\N(A)+\M_h(A)\rt) \quad\st{j_0\up\oo}\longto\ 0.
\end{multline*}
Hence, the series $\sum_j A_j^m$ converges in $\F$-norm uniformly with respect to $m$. As a consequence, we can pass to the limit in $A=\sum_jA^m_j$ and deduce the identity,
\be\label{prf_thm_decR_4} 
A=\sum_{j\ge1} A_j.
\ee
Then, by the triangle inequality for $\N$ and $\M_h$ (see~\eqref{count_sub}), there holds,
\be\label{prf_thm_decR_5.5} 
\N(A)\le\sum_{j\ge1}\N(A_j)\qquad\qquad\text{and}\qquad\qquad\M_h(A)\le\sum_{j\ge1}\M_h(A_j).
\ee
By definition, we have for $m\ge1$, $\N(A)=\sum_j\N(A_j^m)$ and $\M_h(A)=\sum_j\M_h(A_j^m)$. Together with~\eqref{prf_thm_decR_2},~\eqref{prf_thm_decR_3}, Fatou  and~\eqref{prf_thm_decR_5.5}, this leads to
\be\label{prf_thm_decR_5} 
\N(A)=\sum_{j\ge1}\N(A_j)\qquad\qquad\text{and}\qquad\qquad\M_h(A)=\sum_{j\ge1}\M_h(A_j).
\ee
Thus~\eqref{prf_thm_decR_2}  improves to $\N(A_j)=\lim_{m\up\oo}\N({A_j^m})$ and we get eventually, 
\be\label{prf_thm_decR_6} 
\N(A_j)=v_j\quad\text{for every }j\ge1, \text{ where the }v_j\text{'s are given by }\eqref{prf_thm_decR_1}.
\medskip 
\ee
\noindent
\textit{Step 3. (Conclusion by strict subadditivity of $h$).} 

At this point, we know that the $A_j$'s are normal rectifiable chains satisfying~\eqref{prf_thm_decR_4},~\eqref{prf_thm_decR_5} and~\eqref{prf_thm_decR_6}. To conclude that the sequence $A_j$ is a minimizer of~\eqref{prf_thm_decR_1} we still have to show that it is a set-decomposition of $A$.

Let $\Sigma$ be a countably $k$-rectifiable set of $\R^n$ such that for every $j\ge1$, $A_j=A_j\restr\Sigma$ and let us write
\[
A=w\xi\h^k\restr\Sigma\qquad\text{and}\qquad A_j=w_j\xi\h^k\restr\Sigma\quad\text{for }j\ge1,
\] 
where $w$, $w_j$ are Borel measurable functions on $\R^n$ and $\xi$ is a Borel measurable field of unit $k$-vectors orienting $\Sigma$. From~\eqref{prf_thm_decR_4}\&\eqref{prf_thm_decR_5} we have $w(x)=\sum w_j(x)$ for $\h^k$-almost every $x\in \Sigma$. Using the fact that $h$ is increasing and subadditive, we compute, 
\begin{multline*}
\M_h(A)=\int_\Sigma h\lt(|w|\rt)\, d\h^k =\int_\Sigma h\lt(\lt|\sum_{j\ge1}w_j\rt|\rt)\, d\h^k\\
\le  \int_\Sigma h\lt(\sum_{j\ge1}\lt|w_j\rt|\rt)\,d\h^k\le \sum_j  \int_\Sigma h\lt(\lt|w_j\rt|\rt)\,d\h^k=\sum\M_h(A_j).
\end{multline*}
By~\eqref{prf_thm_decR_5} the inequalities are identities and since $h$ is strictly subadditive, we conclude that for $\h^k$-almost every $x\in\Sigma$, there exists $j_0\ge 1$ such that $|w_{j_0}(x)|=|w(x)|$ and $w_j(x)=0$ for $j\ne j_0$. Since $\sum w_j=w$ $\h^k$-almost everywhere on $\Sigma$, we have in fact $w_{j_0}(x)=w(x)$. Hence $A_j$ is a set-decomposition of $A$ which proves the result.   
\end{proof}

\subsection*{Acknowledgments}
M. Goldman is partially supported by the ANR SHAPO. B. Merlet is partially supported by the INRIA team RAPSODI and the Labex CEMPI (ANR-11-LABX-0007-01).

\bibliographystyle{alpha}
\bibliography{BibStripes}

\end{document}